\newcommand{\assign}{:=}
\newcommand{\dueto}[1]{\textup{\textbf{(#1) }}}
\newcommand{\longhookrightarrow}{{\lhook\joinrel\relbar\joinrel\rightarrow}}
\newcommand{\mathd}{\mathrm{d}}
\newcommand{\tmop}[1]{\ensuremath{\operatorname{#1}}}
\newcommand{\tmtextit}[1]{{\itshape{#1}}}
\definecolor{grey}{rgb}{0.75,0.75,0.75}
\definecolor{orange}{rgb}{1.0,0.5,0.5}
\definecolor{brown}{rgb}{0.5,0.25,0.0}
\definecolor{pink}{rgb}{1.0,0.5,0.5}
\newtheorem{lemma}{Lemma}
\newtheorem{remark}{Remark}%}
\newtheorem{theorem}{Theorem}
\begin{document}

\title[Global Regularity of 2D Generalized MHD]{On Global Regularity of 
2D Generalized Magnetohydrodynamic Equations}
\author{Chuong V. Tran, Xinwei Yu, Zhichun Zhai}
\address{Chuong V. Tran: School of Mathematics and Statistics, 
University of St. Andrews, St Andrews KY16 9SS, United Kingdom}
\email{chuong@mcs.st-and.ac.uk}
\address{Xinwei Yu and Zhichun Zhai: Department of Mathematical and 
Statistical Sciences, University of Alberta, Edmonton, AB, T6G 2G1, Canada}
\email{xinweiyu@math.ualberta.ca, zhichun1@ualberta.ca}

\subjclass[2000]{35Q35,76B03,76W05}
\date{Apr. 16, 2012}

\keywords{Magnetohydrodynamics, Generalized diffusion, Global regularity}

\begin{abstract}
  In this article we study the global regularity of 2D generalized
  magnetohydrodynamic equations (2D GMHD), in which the dissipation terms 
  are $- \nu \left( - \triangle \right)^{\alpha} u$ and $- \kappa \left( -
  \triangle \right)^{\beta} b$. We show that smooth solutions are global in
  the following three cases: $\alpha \geqslant 1 / 2, \beta \geqslant 1$; 
  $0 \leqslant \alpha < 1 / 2, 2 \alpha + \beta > 2$; $\alpha \geqslant 2, 
  \beta = 0$. We also show that in the inviscid case $\nu = 0$, if 
  $\beta > 1$, then smooth solutions are global as long as the direction 
  of the magnetic field remains smooth enough. \ 
\end{abstract}
\maketitle

\section{Introduction}

%cvt1: break the opening para into 2 paras amd remove the standard
% MHD equations (we really do not need to include them).

Recent mathematical studies of fluid mechanics have found it beneficial
to replace the Laplace operator $\triangle$, representing molecular 
diffusion, by fractional powers of $-\triangle$. For the magnetohydrodynamic 
(MHD) equations, this practice results in the generalized MHD (GMHD) system
\begin{eqnarray}
  u_t + u \cdot \nabla u & = & - \nabla p + b \cdot \nabla b - \nu \Lambda^{2
  \alpha} u,  \label{eq:GMHD-u}\\
  b_t + u \cdot \nabla b & = & b \cdot \nabla u - \kappa \Lambda^{2 \beta} b, 
  \label{eq:GMHD-b}\\
  \nabla \cdot u = \nabla \cdot b & = & 0,  \label{eq:GMHD-div}
\end{eqnarray}
which is the subject of the present study. Here 
$\nu, \kappa, \alpha, \beta \ge 0$ and $\Lambda = (-\triangle)^{1/2}$ 
is defined in terms of Fourier transform by 
\begin{equation}
\widehat{\Lambda f}(\xi) = \left| \xi \right| \widehat{f}(\xi).
\end{equation}
Equations (\ref{eq:GMHD-u}--\ref{eq:GMHD-div}) have been studied in
some detail by Wu \cite{Wu2003,Wu2004a} and Cao and Wu \cite{Cao2011},
with an emphasis on the issue of solution regularity.

The generalization of diffusion in the above manner has been implemented 
to other fluid systems, including the Navier--Stokes, Boussinesq, and
surface quasi-geostrophic equations (see e.g. \cite{Chae2011}, 
\cite{Chae2012pre}, \cite{Dong2010}, \cite{Hmidi2011}, \cite{Hmidi2010a}, 
\cite{Hmidi2010}, \cite{Li2010}). Studying these generalized equations
has enabled researchers to gain a deeper understanding of the strength 
and weaknesses of available mathematical methods and techniques, and, 
in some cases, motivated and inspired the invention of new methods. An 
illustrating example of the latter effect is the recent breakthroughs 
in the study of the surface quasi-geostrophic equations 
(\cite{Caffarelli2010}, \cite{Constantin2011}, \cite{Kiselev2010}, 
\cite{Kiselev2007}).

%cvt1 end

%cvt2 some rewording

The problem of global well-posedness of the usual $n$-dimensional ($n$D) 
MHD (or GMHD with $\alpha,\beta \le 2$) equations, where $n \ge 3$, is 
highly challenging for obvious reasons. One is that the MHD equations 
include the Navier-Stokes (or Euler when $\nu = 0$) system as a special 
case (obtained by setting the initial magnetic field to zero), for which 
the issue of regularity has not been resolved. Another is that the quadratic 
coupling between $u$ and $b$ can introduce additional technical difficulties, 
even though this coupling may actually have some regularizing effects 
(see below). For $n = 2$, this coupling invalidates the vorticity
conservation, thereby becoming the main reason for the unavailability 
of a proof of global regularity for the ideal dynamics. Similar (but 
probably more manageable) situations arise when the 2D Euler equations 
are linearly coupled with the bouyancy equation in the Boussinesq system
or have a linear forcing term (\cite{Constantin2011}). \

%cvt2 end

So far the best result for the global regularity of the $n$D GMHD 
equations (\ref{eq:GMHD-u}--\ref{eq:GMHD-div}) has been derived in 
{\cite{Wu2011}}, where it has been proved that the system is globally 
regular as long as the following conditions
\begin{equation}
  \alpha \geqslant \frac{1}{2} + \frac{n}{4}, \hspace{2em} \beta > 0,
  \hspace{2em} \alpha + \beta \geqslant 1 + \frac{n}{2} ,
  \label{eq:GMHD-cond-Wu}
\end{equation}
are satisfied. Note that for simplicity of presentation, the above 
conditions have been given in slightly stronger forms than the exact 
result in {\cite{Wu2011}}, where the dissipation terms are allowed 
to be logarithmically weaker than $- \Lambda^{2 \alpha} u$ and 
$- \Lambda^{2 \beta} b$. Note also that for the case $n = 3$, conditions 
similar to (\ref{eq:GMHD-cond-Wu}) have been obtained in {\cite{Zhou2007}}, 
with $\beta > 0$ replaced by $\beta \geqslant 1$.

%cvt3 some rewording & citing our PRE paper

When $n \geqslant 3$, the result (\ref{eq:GMHD-cond-Wu}) is unlikely to 
be improved using current mathematical techniques. The reason is that 
the global regularity for the $n$D generalized Navier-Stokes equations
\begin{equation}
  u_t + u \cdot \nabla u = - \nabla p - \Lambda^{2 \alpha} u, \hspace{2em}
  \nabla \cdot u = 0. \label{eq:GNSE}
\end{equation}
is still unavailable for $\alpha < \frac{1}{2} + \frac{n}{4}$ (See 
\cite{Tao2009} for a proof of global regularity in the case of 
logarithmically weaker dissipation than $-\Lambda^{1+n/2}u$). On the
other hand, when $n = 2$, the availability of global regularity for the generalized Navier-Stokes equations (\ref{eq:GNSE}) for all $\alpha\geqslant 0$ suggests that the conditions in (\ref{eq:GMHD-cond-Wu}) could be excessive and may be weakened to some extent. In particular, it can be easily seen that the smoothness of either $u$ or $b$ guarantees 
that of the other and therefore of the system as a whole (\cite{TY2012}). 
Hence, global regularity could intuitively be possible with either 
$\nu=0$ or $\kappa=0$ for suitable conditions on $\beta$ or $\alpha$. 

%cvt3 end

In this article, we quantitatively confirm the above observations. 
More precisely, we show that when $n = 2$, the condition 
$\alpha \geqslant 1 = \frac{1}{2} + \frac{n}{4}$ is not needed
for the global regularity of the system. In particular, we focus on the
regime $\alpha < 1$ and show that the GMHD system is globally regular 
when $0 \leqslant \alpha < 1/2, 2\alpha + \beta > 2$ or when 
$\alpha \geqslant 1 / 2, \beta \geqslant 1$. We also prove global 
regularity for the case $\alpha\geqslant 2, \kappa=0$, thereby removing 
the technical condition $\beta > 0$. Furthermore, we study the inviscid 
case \ $\nu = 0$, $\kappa > 0$, and show that when $\beta > 1$, the 
GMHD system is globally regular as long as the magnetic lines are smooth 
enough. This result is consistent with numerical and experimental 
observations of the MHD dynamics, where the magnetic field appears to 
have the effect of ``suppressing'' the appearance of small scales in 
the fluid (see e.g. {\cite{Kraichnan1965}}), and as a consequence 
preventing the formation of singularities. Our finding is also 
consistent with a number of mathematical results exhibiting the 
regularizing effect on the streamlines and vortex lines in
Navier-Stokes and Euler dynamics (See e.g. {\cite{Constantin1993}},
{\cite{Deng2005}}, {\cite{Deng2006}}, {\cite{Vasseur2009}}).

The rest of this article is organized as follows. In Section
\ref{sec:main_results} we summarize the main results and give a brief
overview of the key ideas of their proofs. As these proofs use 
different methods for each case, we present them in separate
sections. Section \ref{sec:Case I} features the proof for global
regularity when $\alpha \geqslant 1 / 2, \beta \geqslant 1$. 
Sections \ref{Sec:Case II} and \ref{Sec:alpha=2} contain the proofs 
for the cases $0 \leqslant \alpha < 1 / 2, 2 \alpha + \beta > 2$ 
and $\alpha \geqslant 2, \beta = 0$, respectively. In Section 
\ref{sec:b_good} we prove global regularity under the assumption 
on the smoothness of magnetic lines.

Throughout this paper, we will set $\kappa = \nu = 1$ to simplify the
presentation. It is a standard exercise to adjust various constants 
to accommodate other values of $\kappa, \nu$, as long as both are
positive. We also identify the cases $\alpha = 0$ and $\beta = 0$ 
with $\nu = 0$ and $\kappa = 0$, respectively.

\section{Main Results}\label{sec:main_results}

Our first main result is the following global regularity theorem. \

\begin{theorem}
  \label{thm:global}Consider the GMHD equations
  (\ref{eq:GMHD-u}--\ref{eq:GMHD-div}) in 2D. Assume $\left( u_0, b_0 \right)
  \in H^k$ with $k > 2$. Then the system is globally regular for the following
  $\alpha, \beta$:
  \begin{itemize}
    \item $\alpha \geqslant 1 / 2, \beta \geqslant 1$;
    
    \item $0 \leqslant \alpha < 1 / 2, 2 \alpha + \beta > 2$;
    
    \item $\alpha \geqslant 2, \beta = 0$.
  \end{itemize}
\end{theorem}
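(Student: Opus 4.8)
The plan is to establish, in each of the three regimes, an a priori bound that controls a sufficiently strong norm of $(u,b)$ on any finite time interval, from which global regularity follows by the standard local existence theory together with a continuation criterion (which, as noted in the introduction via \cite{TY2012}, reduces to controlling either $u$ or $b$ in a subcritical space). Throughout I would begin from the basic energy identity: dotting \eqref{eq:GMHD-u} with $u$ and \eqref{eq:GMHD-b} with $b$, integrating, and using $\nabla\cdot u=\nabla\cdot b=0$ to kill the transport and pressure terms as well as the cross term $\int (b\cdot\nabla b)\cdot u+\int(b\cdot\nabla u)\cdot b=0$, yielding
\[
  \frac{1}{2}\frac{\mathd}{\mathd t}\bigl(\|u\|_{L^2}^2+\|b\|_{L^2}^2\bigr)+\|\Lambda^\alpha u\|_{L^2}^2+\|\Lambda^\beta b\|_{L^2}^2=0,
\]
so $\|u\|_{L^2}^2+\|b\|_{L^2}^2$ is nonincreasing and the dissipation is integrable in time. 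The work is then to bootstrap this to higher regularity in each case.

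For the case $\alpha\ge 1/2,\ \beta\ge 1$, I would next derive an estimate for the combined vorticity/current quantities. Writing $\omega=\nabla^\perp\cdot u$ and $j=\nabla^\perp\cdot b$, one gets evolution equations with the structure $\omega_t+u\cdot\nabla\omega=b\cdot\nabla j-\Lambda^{2\alpha}\omega$ and $j_t+u\cdot\nabla j=b\cdot\nabla\omega+(\text{lower-order quadratic terms in }\nabla u,\nabla b)-\Lambda^{2\beta}j$; performing an $L^2$ estimate on the pair and using that the top-order exchange terms $\int(b\cdot\nabla j)\omega+\int(b\cdot\nabla\omega)j$ cancel, one is left to absorb a term like $\int (\nabla u)(\nabla b)\, j$, which by H\"older, Sobolev embedding in 2D, and interpolation is controlled using the $\beta\ge 1$ dissipation on $b$ (which gives, via the energy bound, an $L^2_t H^1_x$-type control on $j$) together with the $\alpha\ge 1/2$ dissipation on $u$. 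The upshot should be a Gr\"onwall inequality for $\|\omega\|_{L^2}^2+\|j\|_{L^2}^2$; I expect the delicate point is the precise interpolation budget showing $\alpha\ge 1/2$ exactly suffices. For $0\le\alpha<1/2,\ 2\alpha+\beta>2$, the vorticity estimate has too little dissipation on $u$, so instead I would estimate $(u,b)$ in $H^s$ for some $s<1$ chosen inside the window allowed by $2\alpha+\beta>2$, commuting $\Lambda^s$ through the equations, using commutator (Kato--Ponce) estimates for the transport terms and paraproduct decompositions for $b\cdot\nabla b$ and $b\cdot\nabla u$, and distributing the derivative loss between the two dissipation operators — here the condition $2\alpha+\beta>2$ is precisely what makes the sum of the required Sobolev exponents affordable. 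For $\alpha\ge 2,\ \beta=0$ (so no dissipation on $b$), the strategy must be different again: since there is no smoothing on $b$, I would propagate regularity of $b$ purely by the transport-type structure of \eqref{eq:GMHD-b}, controlling $\|\nabla u\|_{L^\infty_t L^\infty_x}$ (or a suitable substitute) from the very strong $\Lambda^{2\alpha}u$, $\alpha\ge 2$, dissipation — strong enough that the "stretching" term $b\cdot\nabla u$ fed back into the $u$-equation is benign — and then running a Beale--Kato--Majda / logarithmic-type argument for $b$.

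The main obstacle, common to all three cases but sharpest in the regime $0\le\alpha<1/2,\ 2\alpha+\beta>2$, is closing the highest-order estimate with the limited and \emph{asymmetric} dissipation: one must split the derivative cost of the nonlinear coupling terms $b\cdot\nabla b$ and $b\cdot\nabla u$ between $\Lambda^{2\alpha}$ and $\Lambda^{2\beta}$ so that each piece is absorbed by the corresponding dissipation with room to spare, and the condition $2\alpha+\beta>2$ is exactly the arithmetic of that split (together with the 2D Sobolev/Ladyzhenskaya inequality $\|f\|_{L^4}^2\lesssim\|f\|_{L^2}\|\nabla f\|_{L^2}$ and its fractional analogues). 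I expect that cancellation of the top-order magnetic exchange terms — the fact that the "bad" terms $b\cdot\nabla j$ and $b\cdot\nabla\omega$ (resp. their $H^s$-analogues) pair to zero — is what keeps the problem closable at all, and that getting the interpolation exponents to fit under the stated inequalities (rather than a strictly stronger one) is where the real care is needed; the case $\alpha\ge 2,\beta=0$ instead hinges on whether the $\alpha\ge 2$ dissipation is genuinely strong enough to give an $L^1_t$ bound on $\|\nabla u\|_{L^\infty_x}$ without any help from $b$.
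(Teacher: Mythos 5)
Your overall strategy for Case I matches the paper's: the energy identity, then an $L^2$ estimate on the pair $(\omega,j)$ exploiting the cancellation of $\int(b\cdot\nabla j)\,\omega+\int(b\cdot\nabla\omega)\,j$. Two corrections, though. First, that level of the estimate closes for every $\alpha\geqslant 0$ once $\beta\geqslant 1$: the leftover trilinear term $\int|\nabla u|\,|\nabla b|\,|j|$ is bounded by $C\|\omega\|_{L^2}\|j\|_{L^2}\|\Lambda j\|_{L^2}$ and absorbed entirely by the magnetic dissipation, so $\alpha\geqslant 1/2$ is not the ``interpolation budget'' at this stage. Second, a Gr\"onwall bound on $\|\omega\|_{L^2}^2+\|j\|_{L^2}^2$ does not yet give regularity: the BKM-type continuation criterion needs $\omega,j\in L^1(0,T;\mathrm{BMO})$, so one must push to $\nabla\omega,\nabla j\in L^\infty_tL^2_x$, and it is only there, in the term $\int|\nabla u|\,|\nabla\omega|^2$, that $\alpha\geqslant 1/2$ is actually used. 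Your sketch stops one derivative short of where the named hypothesis does its work.

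For Case II your route is genuinely different and, as described, I do not see how it closes. An $H^s$ bound on $(u,b)$ with $s<1$ is weaker than the $H^1$ bound already available (the Case I first step needs only $\beta\geqslant 1$), and a symmetric commutator estimate at the next level runs straight into $\int|\nabla u|\,|\nabla\omega|^2$ with essentially no dissipation on $u$ when $\alpha<1/2$; ``distributing the derivative loss between the two dissipations'' does not resolve this because that term involves only $u$. The paper's mechanism is asymmetric: it first proves a new blow-up criterion in $\|\omega\|_{L^p}$ for any $p>1/\alpha$, and then bounds $\|\omega\|_{L^p}$ from the vorticity equation using the positivity of $\int(\Lambda^{2\alpha}\omega)|\omega|^{p-2}\omega$ and $\frac{\mathd}{\mathd t}\|\omega\|_{L^p}\leqslant\|b\|_{L^\infty}\|\nabla j\|_{L^p}$, where $\nabla j\in L^2(0,T;L^p)$ for $p\leqslant 2/(2-\beta)$ by the magnetic dissipation alone; the hypothesis $2\alpha+\beta>2$ is exactly the compatibility $1/\alpha<2/(2-\beta)$. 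That idea --- all the derivative cost paid by $b$, with $\alpha$ entering only through the exponent in the criterion --- is missing from your plan.

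For Case III you have located the obstruction but not overcome it, and your closing sentence concedes the point. With $\alpha=2$ the energy law gives only $\triangle u\in L^2_tL^2_x$, hence $\nabla u\in L^2_t\mathrm{BMO}$, which is logarithmically short of the $L^1_tL^\infty_x$ control your transport argument for $b$ needs; a direct attack therefore fails at the borderline. The paper closes this gap with a Lei--Zhou-type argument: assume $\|\omega\|_{H^2}+\|j\|_{H^2}$ blows up at $T$, pick $T_0<T$ with $\int_{T_0}^T\|\omega\|_{\mathrm{BMO}}$ small, invoke the logarithmic inequality $\|\nabla u\|_{L^\infty}\leqslant C(1+\|u\|_{L^2}+\|\omega\|_{\mathrm{BMO}}(1+\log(1+\|\omega\|_{H^2}^2+\|j\|_{H^2}^2)))$, and run a log-Gr\"onwall estimate on $[T_0,T]$ in which the worst quantity enters only with a power strictly less than one, producing a finite bound and a contradiction. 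Without this device (or an equivalent) the $\beta=0$ case does not close.
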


\begin{remark}
  Combining the above theorem with the main result in {\cite{Wu2011}}, 
  we see that the 2D GMHD system is globally regular for all 
  $\alpha + \beta \geqslant 2$ except for $\alpha = 0, \beta = 2$. Thus 
  we have removed almost all technical conditions on $\alpha$ and $\beta$. 
\end{remark}

The three cases will be proved using different methods, as different types of
cancellation of the 2D GMHD system will be exploited. More specifically,
\begin{itemize}
  \item for $\alpha \geqslant 1 / 2, \beta \geqslant 1$, we apply standard
  $L^2$-based energy method, taking advantage of the special cancellation 
  that occurs for estimates in $H^1$.
  
  \item for $0 \leqslant \alpha < 1 / 2, 2 \alpha + \beta > 2$, 
  we derive a new non blow-up criterion in $L^p$ norm of the vorticity 
  $\omega = \nabla^\bot\cdot u = -\partial_2u_1 + \partial_1u_2$ 
  and then show that this criterion is indeed satisfied.
  
  \item for $\alpha \geqslant 2$, $\beta = 0$, we adapt the idea proposed
  in {\cite{Lei2009}}, carrying out a kind of ``weakly nonlinear'' energy
  estimate which takes advantage of the fact that in this case we have
  ``almost'' $H^1$ a priori bound. 
\end{itemize}
Our second main result is the following theorem dealing with the case 
$\nu = 0$ (for our purpose this is the same as $\alpha = 0$ since we do 
not impose any restriction on the size of the initial data).

\begin{theorem}
  \label{thm:b_good}Consider the  GMHD system
  (\ref{eq:GMHD-u}--\ref{eq:GMHD-div}) in 2D with $\alpha = 0$ and $\beta > 1$.
  Assume $\left( u_0, b_0 \right) \in H^k$ with $k > 2$. Then the system is
  globally regular if $\widehat{b} \assign \frac{b}{\left| b \right|} \in
  L^{\infty} \left( 0, T ; W^{2, \infty} \right)$.
\end{theorem}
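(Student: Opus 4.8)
The plan is to derive an a priori bound on $\|u\|_{H^1}$ (equivalently $\|\omega\|_{L^2}$), which by the regularity criterion mentioned in the introduction (smoothness of either $u$ or $b$ suffices, cf.\ \cite{TY2012}) will imply global regularity. The starting point is the basic energy identity: since $\alpha=0$, testing \eqref{eq:GMHD-u} with $u$ and \eqref{eq:GMHD-b} with $b$ and using the divergence-free conditions, the nonlinear and coupling terms cancel, yielding
\begin{equation}
\frac{1}{2}\frac{\mathd}{\mathd t}\left(\|u\|_{L^2}^2+\|b\|_{L^2}^2\right)+\|\Lambda^{\beta}b\|_{L^2}^2=0,
\end{equation}
so $\|u\|_{L^2}^2+\|b\|_{L^2}^2\le\|u_0\|_{L^2}^2+\|b_0\|_{L^2}^2$ and $\Lambda^\beta b\in L^2(0,T;L^2)$. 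Next I would take the curl of \eqref{eq:GMHD-u} to obtain the vorticity equation $\omega_t+u\cdot\nabla\omega = \nabla^\bot\cdot(b\cdot\nabla b) = b\cdot\nabla j$ (in 2D, where $j=\nabla^\bot\cdot b$ is the current), and test with $\omega$; the transport term drops and one is left with $\frac12\frac{\mathd}{\mathd t}\|\omega\|_{L^2}^2 = \int (b\cdot\nabla j)\,\omega$. The entire difficulty is to control this right-hand side — there is no dissipation on $u$, so we must extract all needed smoothing from the magnetic diffusion $\Lambda^{2\beta}b$ with $\beta>1$, and this is precisely where the hypothesis $\widehat b=b/|b|\in W^{2,\infty}$ enters.

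The key device is the decomposition $b=|b|\,\widehat b$. Writing $b\cdot\nabla j$ and then $j=\nabla^\bot\cdot b=\nabla^\bot\cdot(|b|\widehat b)$, one can transfer derivatives so that the top-order part of the nonlinearity involves derivatives of the \emph{scalar} $|b|$ — which is transported by $u$ with the same diffusion structure — times bounded geometric factors built from $\widehat b$ and its first two derivatives. More precisely, I expect to rewrite $\int(b\cdot\nabla j)\omega$, after integration by parts in the transport/curl structure, in a form where the genuinely dangerous term is bilinear in $\nabla(|b|)$-type quantities (or in $b$ and $j$) with all the $\widehat b$-dependence appearing only through $\|\widehat b\|_{W^{2,\infty}}$. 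Then, using $\beta>1$, interpolation inequalities of the form $\|\Lambda^s b\|_{L^q}\lesssim\|b\|_{L^2}^{1-\theta}\|\Lambda^\beta b\|_{L^2}^{\theta}$ for appropriate $s<\beta$, $q$, $\theta$, together with the parabolic gain coming from $\int_0^T\|\Lambda^\beta b\|_{L^2}^2$, should let me close a Grönwall inequality
\begin{equation}
\frac{\mathd}{\mathd t}\|\omega\|_{L^2}^2 \le C\left(\|\widehat b\|_{W^{2,\infty}}\right)\,\Big(1+\|\Lambda^\beta b\|_{L^2}^2\Big)\,\|\omega\|_{L^2}^2 + (\text{lower order}),
\end{equation}
whose integrating factor is finite on $[0,T]$ by the energy estimate. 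Note the condition $\beta>1$ (rather than $\beta\ge1$) gives a strict margin in the interpolation exponents, which is what allows the bounded geometric factors to be absorbed without needing any a priori bound on $u$ beyond $L^2$.

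The main obstacle, as anticipated, is the algebraic bookkeeping in the second step: showing that after the substitution $b=|b|\widehat b$ and the integrations by parts, \emph{every} term of order higher than what the magnetic dissipation can absorb is multiplied by a factor that is controlled purely by $\|\widehat b\|_{L^\infty(0,T;W^{2,\infty})}$, with no hidden dependence on $\nabla u$ or on $\|b\|_{H^2}$ beyond what energy and interpolation provide. A secondary technical point is handling the possible vanishing of $|b|$, where $\widehat b$ is undefined — I would address this either by working on the open set $\{|b|>0\}$ and checking the boundary contributions vanish, or, more robustly, by a regularization $\widehat b_\epsilon = b/\sqrt{|b|^2+\epsilon^2}$, deriving $\epsilon$-uniform estimates, and passing to the limit. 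Once the $H^1$ bound is in hand, bootstrapping to $H^k$ for $k>2$ and hence global smoothness is standard and follows the same lines as the proof of Theorem \ref{thm:global}.
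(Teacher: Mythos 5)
There is a genuine gap: the endpoint you aim for --- an a priori bound on $\|\omega\|_{L^2}$ --- is both already available and insufficient. Lemma \ref{lem:H1-est} holds for all $\alpha\geqslant 0$, $\beta\geqslant 1$ with no hypothesis whatsoever on $\widehat{b}$, so $\omega\in L^{\infty}(0,T;L^2)$ comes for free; but with $\nu=0$ there is no dissipation on $u$, and $L^{\infty}_tL^2_x$ control of $\omega$ does not yield the $\int_0^T\|\omega\|_{\tmop{BMO}}\,\mathd t<\infty$ demanded by the BKM-type criterion of \cite{Caflisch1997} (nor does it make $u$ ``smooth'' in the sense of the remark from \cite{TY2012} that you invoke). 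So even if your Gr\"onwall inequality closed, it would not finish the proof: the hypothesis on $\widehat{b}$ must buy a strictly stronger norm of $\omega$ than $L^2$.

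The repair is to aim your decomposition at $L^{\infty}$ rather than $L^2$, which is exactly what the paper does. From $\nabla\cdot b=0$ one gets $\widehat{b}\cdot\nabla|b|=-(\nabla\cdot\widehat{b})\,|b|$, whence
\begin{equation*}
b\cdot\nabla b=\bigl[\widehat{b}\cdot\nabla\widehat{b}-(\nabla\cdot\widehat{b})\,\widehat{b}\bigr]\,|b|^2
\end{equation*}
contains no derivatives of $|b|$ at all; applying $\nabla^{\bot}\cdot$ then gives $\omega_t+u\cdot\nabla\omega=A\,|b|^2+B\cdot(b\cdot\nabla^{\bot}b)$ with $A,B$ bounded by $\|\widehat{b}\|_{W^{2,\infty}}$ and, crucially, a right-hand side depending only on $b$, not on $u$ or $\omega$. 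Since $\beta>1$, the $H^1$ estimate gives $b,\nabla b\in L^2(0,T;L^{\infty})$, so this forcing lies in $L^1(0,T;L^{\infty})$ and $\|\omega\|_{L^{\infty}}$ is bounded by pure transport along characteristics (or, for rougher solutions, by the $L^p$ estimate uniform in $p$); global regularity then follows from BKM. This identity is precisely the ``algebraic bookkeeping'' you flagged as the main obstacle. Your concern about the vanishing of $|b|$ is legitimate but secondary: the final expression for $b\cdot\nabla b$ is continuous across zeros of $b$, and the regularization $b/\sqrt{|b|^2+\epsilon^2}$ you propose would handle it.
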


\begin{remark}
  The condition on $\widehat{b}$ seems to be independent of the value of 
  $\beta$, in the sense that there is no $\beta_0$ such that as soon as 
  $\beta > \beta_0$, $\widehat{b}$ automatically belongs to 
  $L^{\infty} \left( 0, T ; W^{2,\infty} \right)$. 
\end{remark}

\textbf{Notation. }In the following we will use the standard function spaces $L^p$, $W^{k,p}$, $H^k$ whose norms are defined as
\[
\|f\|_{L^p} := \left(\int_{\mathbb{R}^2} \left|f \right|^p \mathd x\right)^{1/p},\qquad
\|f\|_{W^{k,p}} := \left( \sum_{|\alpha|=k} \|\partial^\alpha f\|_{L^p}^p \right)^{1/p}, \qquad
\|f\|_{H^k}:=\|f\|_{W^{k,2}}
\]
with standard modifications for the case $p=\infty$.

\section{Proof of Theorem \ref{thm:global} Case I: $\alpha \geqslant 1 / 2, \beta
\geqslant 1$.}\label{sec:Case I}

In this section we prove the first case of Theorem \ref{thm:global}. We apply
standard $L^2$-based energy estimates. The key idea here is to carry out 
the $H^1, H^2, H^k$ estimates successively to explore possible cancellations 
at each stage. We would like to mention that the cancellation at the $H^1$ 
stage has been observed before by several authors in the case $\beta = 1$ 
({\cite{Cao2011}},{\cite{Lei2009}}). The general case $\beta \geqslant 1$ 
is almost identical. However for completeness we still include detailed 
arguments.

\subsection{$H^1$ estimates ($L^2$ estimates for $\omega,
j$)}\label{subsec:H1-est}

\begin{lemma}
  \label{lem:H1-est}{\dueto{$H^1$ estimate}}Consider the 2D GMHD equations
  (\ref{eq:GMHD-u}--\ref{eq:GMHD-div}), where $\alpha \geqslant 0$ and 
  $\beta \geqslant 1$. Let 
  $\omega = \nabla^{\bot} \cdot u = - \partial_2 u_1 + \partial_1 u_2$ 
  and $j = \nabla^{\bot} \cdot b$. Let $u_0, b_0 \in H^1$. For fixed 
  $T > 0$ and $0 < t < T$, we have
  \begin{equation}
    \left\| \omega \right\|_{L^2}^2 \left( t \right) + \left\| j
    \right\|_{L^2}^2 \left( t \right) + \int_0^t \left( \left\|
    \Lambda^{\alpha} \omega \right\|_{L^2}^2 + \left\| \Lambda^{\beta} j
    \right\|_{L^2}^2 \right) \mathd \tau \leqslant C \left( u_0, b_0, T
    \right) .
  \end{equation}
\end{lemma}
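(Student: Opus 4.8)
The plan is to derive the evolution equations for the scalar vorticity $\omega$ and current $j$, then perform an $L^2$ energy estimate on the coupled system. Taking $\nabla^\bot\cdot$ of (\ref{eq:GMHD-u}) and (\ref{eq:GMHD-b}) in 2D yields
\[
\omega_t + u\cdot\nabla\omega = b\cdot\nabla j - \Lambda^{2\alpha}\omega, \qquad
j_t + u\cdot\nabla j = b\cdot\nabla\omega + 2\partial_1 b_1(\partial_1 u_2 + \partial_2 u_1) - 2\partial_1 u_1(\partial_1 b_2 + \partial_2 b_1) - \Lambda^{2\beta} j,
\]
where I use that the pressure drops out and that $\nabla^\bot\cdot(b\cdot\nabla b)$ and $\nabla^\bot\cdot(b\cdot\nabla u)$ produce a transport term plus a quadratic first-derivative term (the ``$\nabla u\,\nabla b$'' terms, which I will schematically denote $Q(\nabla u,\nabla b)$). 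I would multiply the $\omega$-equation by $\omega$, the $j$-equation by $j$, integrate over $\mathbb{R}^2$, and add.

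The key cancellation is that $\int b\cdot\nabla j\,\omega + \int b\cdot\nabla\omega\,j = \int b\cdot\nabla(\omega j) = 0$ by $\nabla\cdot b = 0$; likewise the transport terms $\int u\cdot\nabla\omega\,\omega$ and $\int u\cdot\nabla j\,j$ vanish by $\nabla\cdot u = 0$. This leaves
\[
\frac{1}{2}\frac{\mathd}{\mathd t}\left(\|\omega\|_{L^2}^2 + \|j\|_{L^2}^2\right) + \|\Lambda^\alpha\omega\|_{L^2}^2 + \|\Lambda^\beta j\|_{L^2}^2 = \int Q(\nabla u,\nabla b)\,j\,\mathd x.
\]
The right-hand side is the only obstacle: it is cubic, of the form $\int \nabla b\,\nabla u\,j$ (since the surviving terms all carry a factor from $\nabla b = \nabla^\bot j$-type quantities, i.e. $\|\nabla b\|_{L^2}\lesssim\|j\|_{L^2}$ by Calder\'on–Zygmund, and similarly $\|\nabla u\|_{L^2}\lesssim\|\omega\|_{L^2}$). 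I would bound it by $\|\nabla u\|_{L^4}\|\nabla b\|_{L^4}\|j\|_{L^2}$ and then use Gagliardo–Nirenberg: $\|\nabla b\|_{L^4}^2\lesssim\|\nabla b\|_{L^2}\|\Lambda^{1+\beta}b\|_{L^2}\lesssim\|j\|_{L^2}\|\Lambda^\beta j\|_{L^2}$ since $\beta\geqslant 1$, and $\|\nabla u\|_{L^4}\lesssim\|\nabla u\|_{L^2}^{1/2}\|\Lambda^{1+\alpha}u\|_{L^2}^{1/2}$ when $\alpha\geqslant 1$, or simply $\|\nabla u\|_{L^4}\lesssim\|\omega\|_{L^2}^{\theta}\|\Lambda^\alpha\omega\|_{L^2}^{1-\theta}\cdot(\text{lower order})$ with an appropriate interpolation valid for $\alpha\geqslant 1/2$, absorbing the highest-order factors into the dissipation via Young's inequality and leaving a term controlled by a power of $\|\omega\|_{L^2}^2 + \|j\|_{L^2}^2$ times the already-available $L^2$-in-time bounds on $\|\Lambda^\alpha\omega\|_{L^2},\|\Lambda^\beta j\|_{L^2}$.

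The main subtlety — and where I would be most careful — is checking that the interpolation exponents genuinely close for the borderline case $\alpha = 1/2,\beta = 1$: one needs the product of the fractional powers hitting $\Lambda^\alpha\omega$ and $\Lambda^\beta j$ to have total exponent at most $2$ so that Young's inequality leaves a time-integrable remainder, and one must exploit the a priori energy bound $\|u\|_{L^2},\|b\|_{L^2}\in L^\infty(0,T)$ (from the basic $L^2$ estimate, which is immediate since $\frac{\mathd}{\mathd t}(\|u\|_{L^2}^2+\|b\|_{L^2}^2)\leqslant 0$) to handle the low-frequency factors in the Gagliardo–Nirenberg inequalities, which on $\mathbb{R}^2$ do not directly bound $\|\nabla u\|_{L^2}$ by $\|\Lambda^\alpha\omega\|_{L^2}$ alone. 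Once the right-hand side is shown to be $\leqslant \varepsilon(\|\Lambda^\alpha\omega\|_{L^2}^2+\|\Lambda^\beta j\|_{L^2}^2) + C\,g(t)(\|\omega\|_{L^2}^2+\|j\|_{L^2}^2)$ with $g\in L^1(0,T)$, Gr\"onwall's inequality finishes the proof.
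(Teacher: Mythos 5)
Your setup — the $\omega,j$ equations, the cancellation $\int b\cdot\nabla(\omega j)\,\mathd x=0$, and the reduction to the single trilinear term $\int Q(\nabla u,\nabla b)\,j\,\mathd x$ — matches the paper exactly. The gap is in how you distribute H\"older's inequality on that term. You bound it by $\|\nabla u\|_{L^4}\|\nabla b\|_{L^4}\|j\|_{L^2}$, which forces you to control $\|\nabla u\|_{L^4}$; by Gagliardo--Nirenberg in 2D this costs half a derivative beyond $\|\omega\|_{L^2}$, and as you yourself note the interpolation into $\|\Lambda^\alpha\omega\|_{L^2}$ only closes for $\alpha\geqslant 1/2$. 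But the lemma is asserted for \emph{all} $\alpha\geqslant 0$ (with only $\beta\geqslant 1$), and this full range is essential later in the paper: Case II uses it with $\alpha<1/2$ and Theorem \ref{thm:b_good} with $\alpha=0$, where there is no dissipation on $\omega$ at all to absorb anything into. The paper instead splits as $\|\nabla u\|_{L^2}\|\nabla b\|_{L^4}\|j\|_{L^4}$, so that the velocity factor is taken in $L^2$ (controlled by $\|\omega\|_{L^2}$ with no derivative loss) and both $L^4$ factors sit on the magnetic side, where $\|\nabla b\|_{L^4}\lesssim\|j\|_{L^4}\lesssim\|j\|_{L^2}^{1/2}\|\Lambda j\|_{L^2}^{1/2}$ and the resulting $\varepsilon\|\Lambda j\|_{L^2}^2$ is absorbed by the $\beta\geqslant1$ dissipation alone. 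Your split would prove a weaker lemma that does not suffice for the rest of the argument.

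A secondary point: you invoke ``already-available $L^2$-in-time bounds on $\|\Lambda^\alpha\omega\|_{L^2},\|\Lambda^\beta j\|_{L^2}$'' to make the Gr\"onwall factor integrable, but those quantities are precisely the conclusion of the lemma, not a priori inputs. What the basic energy identity actually supplies is $b\in L^2(0,T;H^\beta)$ with $\beta\geqslant1$, hence $\|j\|_{L^2}^2\in L^1(0,T)$; this is the integrable weight $g(t)=\|j\|_{L^2}^2$ that closes Gr\"onwall in the paper's version of the estimate. With the corrected H\"older split, the term you must handle is exactly $C\|\omega\|_{L^2}\|j\|_{L^2}\|\Lambda j\|_{L^2}\leqslant C(\varepsilon)\|j\|_{L^2}^2\|\omega\|_{L^2}^2+\varepsilon\|\Lambda j\|_{L^2}^2$, and everything closes for every $\alpha\geqslant0$.
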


\begin{proof}
  We first apply $\nabla^{\bot} \cdot$ to the GMHD equations
  (\ref{eq:GMHD-u}--\ref{eq:GMHD-div}) to obtain the governing equations for
  the vorticity $\omega$ and the current $j$:
  \begin{eqnarray}
    \omega_t + u \cdot \nabla \omega & = & b \cdot \nabla j - \Lambda^{2
    \alpha} \omega,  \label{eq:omega-H1}\\
    j_t + u \cdot \nabla j & = & b \cdot \nabla \omega + T \left( \nabla u,
    \nabla b \right) - \Lambda^{2 \beta} j.  \label{eq:j-H1}
  \end{eqnarray}
  Here
  \begin{equation}
    T \left( \nabla u, \nabla b \right) = {\color{black} 2 \partial_1 b_1 
    \left( \partial_1 u_2 + \partial_2 u_1 \right) + 2 \partial_2 u_2  \left(
    \partial_1 b_2 + \partial_2 b_1 \right)} . \label{eq:T-H1}
  \end{equation}
  Note that $T$ is bilinear in $\nabla u, \nabla b$ and therefore for any
  $k \geqslant 0$ we have
  \begin{equation}
    \left| \partial^k T \left( \nabla u, \nabla b \right) \right| \leqslant C
    \sum_{m = 0}^k \left| \nabla^{m + 1} u \right|  \left| \nabla^{k - m + 1}
    b \right|
  \end{equation}
  for some constant $C$ depending only on $k$.
  
  Multiplying (\ref{eq:omega-H1}) and (\ref{eq:j-H1}) by $\omega$ and $j$, 
  respectively, integrating, and adding the resulting equations together we obtain
  \begin{equation}
    \frac{1}{2}  \frac{\mathd}{\mathd t} \int_{\mathbbm{R^2}} \left( \omega^2
    + j^2 \right) \mathd x = \int_{\mathbbm{R}^2} T \left( \nabla u, \nabla b
    \right) j \mathd x - \int_{\mathbbm{R}^2} \left( \Lambda^{\alpha} \omega
    \right)^2 \mathd x - \int_{\mathbbm{R}^2} \left( \Lambda^{\beta} j
    \right)^2 \mathd x,
  \end{equation}
  where we have used the following consequences of $\nabla \cdot u = \nabla
  \cdot b = 0$:
  \begin{eqnarray}
    \int_{\mathbbm{R}^2} \left( u \cdot \nabla \omega \right) \omega \mathd x
    & = & 0 ; \\
    \int_{\mathbbm{R}^2} \left( u \cdot \nabla j \right) j \mathd x & = & 0 ;
    \\
    \int_{\mathbbm{R}^2} \left( b \cdot \nabla j \right) \omega \mathd x +
    \int_{\mathbbm{R}^2} \left( b \cdot \nabla \omega \right) j \mathd x & = &
    0. 
  \end{eqnarray}
  Note that all the terms involving derivatives of $\omega$ and $j$ -- 
  the ``worst" terms from energy estimate point of view -- disappear.
  
  Now recall the standard energy conservation which can be obtained by
  multiplying (\ref{eq:GMHD-u}) and (\ref{eq:GMHD-b}) \ by $u$ and $b$
  respectively, integrating, and applying the incompressibility condition
  (\ref{eq:GMHD-div}):
  \begin{equation}
    \frac{1}{2}  \frac{\mathd}{\mathd t} \int_{\mathbbm{R}^2} \left( u^2 + b^2
    \right) \mathd x + \int_{\mathbbm{R}^2} \left[\left( \Lambda^{\alpha} u
    \right)^2 + \left( \Lambda^{\beta} b \right)^2\right] \mathd x = 0.
  \end{equation}
  This gives
  \begin{equation}
    u \in L^{\infty} \left( 0, T ; L^2 \right) \cap L^2 \left( 0, T ;
    H^{\alpha} \right), \hspace{2em} b \in L^{\infty} \left( 0, T ; L^2
    \right) \cap L^2 \left( 0, T ; H^{\beta} \right) .
    \label{eq:energy-est-H1}
  \end{equation}
  As $\beta \geqslant 1$ by Sobolev embedding we easily get
  \begin{equation}
    b \in L^2 \left( 0, T ; H^1 \right) \Longrightarrow j \in L^2 \left( 0, T
    ; L^2 \right) . \label{eq:energy-j-H1}
  \end{equation}
  On the other hand we have
  \begin{equation}
    \left\| \Lambda j \right\|_{L^2} \leqslant C \left\| b \right\|_{L^2}^a
    \left\| \Lambda^{\beta} j \right\|^{1 - a}_{L^2}
  \end{equation}
  for
  \begin{equation}
    a = \frac{\beta - 1}{\beta + 1} .
  \end{equation}
  Using Young's inequality we obtain
  \begin{equation}
    \left\| \Lambda j \right\|_{L^2}^2 \leqslant a \left\| b \right\|_{L^2}^2
    + \left( 1 - a \right)  \left\| \Lambda^{\beta} j \right\|_{L^2}^2
    \Longrightarrow \left\| \Lambda^{\beta} j \right\|_{L^2}^2 \geqslant
    \frac{1}{1 - a}  \left\| \Lambda j \right\|_{L^2}^2 - \frac{a}{1 - a} 
    \left\| b \right\|_{L^2} .
  \end{equation}
  It is worth emphasizing that the above calculation remains valid even 
  when $a = 0$, that is $\beta = 1$.
  
  This leads us to
  \begin{eqnarray}
    \frac{\mathd}{\mathd t} \left( \left\| \omega \right\|_{L^2}^2 + \left\|
    j \right\|_{L^2}^2 \right) & \leqslant & C \int_{\mathbbm{R}^2} \left|
    \nabla u \right|  \left| \nabla b \right|  \left| j \right| \mathd x -
    \frac{1}{\left( 1 - a \right)}  \left\| \Lambda j \right\|_{L^2}
    \nonumber\\
    &  & + \frac{a}{\left( 1 - a \right)}  \left\| b \right\|_{L^2} - 2
    \left\| \Lambda^{\alpha} \omega \right\|_{L^2}^2 - \left\| \Lambda^{\beta}
    j \right\|_{L^2}^2.
  \end{eqnarray}
  By H\"older's inequality, the trilinear term satisfies
  \begin{equation}
    \int_{\mathbbm{R}^2} \left| \nabla u \right|  \left| \nabla b \right| 
    \left| j \right| \mathd x \leqslant \left\| \nabla u \right\|_{L^2} 
    \left\| \nabla b \right\|_{L^4}  \left\| j \right\|_{L^4} .
  \end{equation}
  Owing to the relations
  \begin{equation}
    \nabla u = \nabla \left( - \triangle \right)^{- 1} \nabla^{\bot} \omega
    \text{ \tmop{and} } \nabla b = \nabla \left( - \triangle \right)^{- 1}
    \nabla^{\bot} j
  \end{equation}
  we have, following standard Fourier multiplier theory (see e.g.
  {\cite{Stein1970}}),
  \begin{equation}
    \left\| \nabla u \right\|_{L^2} \leqslant C \left\| \omega \right\|_{L^2}
    \text{ \tmop{and} } \left\| \nabla b \right\|_{L^4} \leqslant C \left\| j
    \right\|_{L^4}
  \end{equation}
  for some absolute constant $C$. It follows that
  \begin{equation}
    \int_{\mathbbm{R}^2} \left| \nabla u \right|  \left| \nabla b \right| 
    \left| j \right| \mathd x \leqslant C \left\| \omega \right\|_{L^2} 
    \left\| j \right\|_{L^4}^2 .
  \end{equation}
  Next, application of the Gagliardo-Nirenberg inequality
  \begin{equation}
    \left\| j \right\|_{L^4} \leqslant C \left\| j \right\|_{L^2}^{1 / 2} 
    \left\| \Lambda j \right\|_{L^2}^{1 / 2}
  \end{equation}
  yields
  \begin{equation}
    \int_{\mathbbm{R}^2} \left| \nabla u \right|  \left| \nabla b \right| 
    \left| j \right| \mathd x \leqslant C \left\| \omega \right\|_{L^2} 
    \left\| j \right\|_{L^2}  \left\| \Lambda j \right\|_{L^2} \leqslant C
    \left( \varepsilon \right)  \left\| j \right\|_{L^2}^2  \left\| \omega
    \right\|_{L^2}^2 + \varepsilon \left\| \Lambda j \right\|_{L^2}^2,
  \end{equation}
  where Young's inequality has been used. Here $\varepsilon$ is a small
  positive number that will be chosen later.

%cvt4 A factor of C missing from the last term of the above equation?
% Also is C(\epsilon)=C/\epsilon? 

  Summarizing the above, we have
  \begin{eqnarray}
    \frac{\mathd}{\mathd t} \left( \left\| \omega \right\|_{L^2}^2 + \left\| j
    \right\|_{L^2}^2 \right) + \left\| \Lambda^{\alpha} \omega
    \right\|_{L^2}^2 + \left\| \Lambda^{\beta} j \right\|_{L^2} & \leqslant &
    C \left( \varepsilon \right)  \left\| j \right\|_{L^2}^2  \left\| \omega
    \right\|_{L^2}^2 + C \varepsilon \left\| \Lambda j \right\|^2_{L^2}
    \nonumber\\
    &  & - \frac{1}{\left( 1 - a \right)}  \left\| \Lambda j \right\|_{L^2}^2
    + \frac{a}{\left( 1 - a \right)}  \left\| b \right\|_{L^2} . 
  \end{eqnarray}
  Taking $\varepsilon$ small enough so that $C \varepsilon < \frac{1}{1 - a}$,
  we obtain
  \begin{equation}
    \frac{\mathd}{\mathd t} \left( \left\| \omega \right\|_{L^2}^2 + \left\| j
    \right\|_{L^2}^2 \right) + \left\| \Lambda^{\beta} j \right\|_{L^2}^2 +
    \left\| \Lambda^{\alpha} \omega \right\|_{L^2}^2 \leqslant C \left(
    \varepsilon \right)  \left\| j \right\|_{L^2}^2  \left\| \omega
    \right\|_{L^2}^2 + \frac{a}{1 - a}  \left\| b \right\|_{L^2} .
  \end{equation}
  As $\left\| b \right\|_{L^2}$ is uniformly bounded in $t$, and $\left\| j
  \right\|_{L^2}^2 \in L^1 \left( 0, T \right)$ ((\ref{eq:energy-est-H1} --
  \ref{eq:energy-j-H1})), the proof is completed.
\end{proof}

%cvt5 We don't seem to have any control over the L2 norm of the vorticity.
% Is the proof really finished here? Do we need some explanation?

\begin{remark}
  \label{rem:reduction}Note that the above proof can be shortened by skipping
  the steps
  \begin{equation}
    \left\| \Lambda j \right\|_{L^2} \leqslant \left\| b \right\|_{L^2}^a
    \left\| \Lambda^{\beta} j \right\|^{1 - a}_{L^2}
  \end{equation}
  and
  \begin{equation}
    \left\| \Lambda^{\beta} j \right\|_{L^2}^2 \geqslant \frac{1}{1 - a} 
    \left\| \Lambda j \right\|_{L^2}^2 - \frac{a}{1 - a}  \left\| b
    \right\|_{L^2}
  \end{equation}
  and directly applying the Gagliardo-Nirenberg inequality
  \begin{equation}
    \left\| j \right\|_{L^4} \leqslant \left\| j \right\|_{L^2}^{a_1}  \left\|
    \Lambda^{\beta} b \right\|_{L^2}^{a_2}  \left\| \Lambda^{\beta} j
    \right\|_{L^2}^{a_3}
  \end{equation}
  for appropriate $a_1, a_2, a_3$, and then use Young's inequality.
  However we choose to first reduce the general situation $\beta \geqslant 1$
  to the particular one $\beta = 1$ to illustrate the following observation:
  For our problem, to prove regularity for $\alpha \geqslant \alpha_0, \beta
  \geqslant \beta_0$ using energy method, it suffices to do so for $\alpha =
  \alpha_0, \beta = \beta_0$. Such reduction significantly reduces the number
  of parameters in higher Sobolev norm estimates and makes the presentation
  much more transparent, as we will see in the following $H^2$ estimate.
\end{remark}

\subsection{$H^2$ estimates ($H^1$ estimates for $\omega,
j$)}\label{subsec:H2-est}

With $H^1$ estimates at hand, we can move on to $H^2$ estimates.
Differentiating (\ref{eq:omega-H1}--\ref{eq:j-H1}) we reach
\begin{equation}
  \left( \partial_i \omega \right)_t + u \cdot \nabla \left( \partial_i \omega
  \right) = - \left( \partial_i u \right) \cdot \nabla \omega + \left(
  \partial_i b \right) \cdot \nabla j + b \cdot \nabla \left( \partial_i j
  \right) - \Lambda^{2 \alpha} \left( \partial_i \omega \right)
\end{equation}
\begin{equation}
  \left( \partial_i j \right)_t + u \cdot \nabla \left( \partial_i j \right) =
  - \left( \partial_i u \right) \cdot \nabla j + \left( \partial_i b \right)
  \cdot \nabla \omega + b \cdot \nabla \left( \partial_i \omega \right) +
  \partial_i \left( T \left( \nabla u, \nabla b \right) \right) - \Lambda^{2
  \beta} \left( \partial_i j \right) .
\end{equation}
This gives the following integral relation:
\begin{eqnarray}
  \frac{\mathd}{\mathd t} \int_{\mathbbm{R}^2} \frac{\left( \partial_i \omega
  \right)^2 + \left( \partial_i j \right)^2}{2} \mathd x& = & - \int_{\mathbbm{R}^2}
  \left[ \left( \partial_i u \right) \cdot \nabla \omega \right] \left(
  \partial_i \omega \right) \mathd x + \int_{\mathbbm{R}^2} \left[ \left(
  \partial_i b \right) \cdot \nabla j \right]  \left( \partial_i \omega
  \right) \mathd x \nonumber\\
  &  & - \int_{\mathbbm{R}^2} \left[ \left( \partial_i u \right) \cdot \nabla
  j \right]  \left( \partial_i j \right) \mathd x + \int_{\mathbbm{R}^2}
  \left[ \left( \partial_i b \right) \cdot \nabla \omega \right]  \left(
  \partial_i j \right) \mathd x \nonumber\\
  &  & + \int_{\mathbbm{R}^2} \left[ \partial_i \left( T \left( \nabla u,
  \nabla b \right) \right) \right]  \left( \partial_i j \right) \mathd x\nonumber\\
  &&-\int_{\mathbbm{R}^2} \left( \Lambda^{\alpha} \partial_i \omega \right)^2
  \mathd x - \int_{\mathbbm{R}^2} \left( \Lambda^{\beta} \partial_i j
  \right)^2 \mathd x. 
\end{eqnarray}
after taking advantage of $\nabla \cdot u = \nabla \cdot b = 0$.

Summing up $i = 1, 2$, we reach
\begin{equation}
  \frac{\mathd}{\mathd t} \left( \left\| \nabla \omega \right\|_{L^2}^2 +
  \left\| \nabla j \right\|_{L^2}^2 \right) \leqslant C \left( I_1 + I_2 + I_3
  + I_4 + I_5 \right) - 2 \left\| \Lambda^{\alpha} \nabla \omega
  \right\|_{L^2}^2 - 2 \left\| \Lambda^{\beta} \nabla j \right\|_{L^2}^2
\end{equation}
with $C$ an absolute constant, and
\begin{eqnarray}
  I_1 & = & \int_{\mathbbm{R}^2} \left| \nabla u \right|  \left| \nabla \omega
  \right|^2 \mathd x ;  \label{eq:I1}\\
  I_2 & = & \int_{\mathbbm{R}^2} \left| \nabla b \right|  \left| \nabla j
  \right|  \left| \nabla \omega \right| \mathd x ;  \label{eq:I2}\\
  I_3 & = & \int_{\mathbbm{R}^2} \left| \nabla u \right|  \left| \nabla j
  \right|^2 \mathd x ;  \label{eq:I3}\\
  I_4 & = & \int_{\mathbbm{R}^2} \left| \nabla b \right|  \left| \nabla \omega
  \right|  \left| \nabla j \right| \mathd x ;  \label{eq:I4}\\
  I_5 & = & \int_{\mathbbm{R}^2} \left[ \left| \nabla^2 u \right|  \left|
  \nabla b \right| + \left| \nabla u \right|  \left| \nabla^2 b \right|
  \right]  \left| \nabla j \right| \mathd x.  \label{eq:I5}
\end{eqnarray}
We estimate these quantities one by one. As discussed in Remark 
\ref{rem:reduction}, we only need to carry out the estimates for 
the case $\alpha = 1 / 2, \beta = 1$.

There are four different cases ($I_2$ and $I_4$ are identical).
\begin{itemize}

  \item Estimating $I_1 = \int_{\mathbbm{R}^2} \left| \nabla u \right|  \left|
  \nabla \omega \right|^2 \mathd x$.
  
  First, by H\"older's inequality we have
  \begin{equation}
    I_1 \leqslant \left\| \nabla u \right\|_{L^3}  \left\| \nabla \omega
    \right\|_{L^3}^2 \leqslant C \left\| \omega \right\|_{L^3}  \left\| \nabla
    \omega \right\|_{L^3}^2 .
  \end{equation}
  Consider the following Gagliardo-Nirenberg inequalities.
  \begin{eqnarray}
    \left\| \nabla \omega \right\|_{L^3} & \leqslant & C \left\| \Lambda^{1 /
    2} \omega \right\|_{L^2}^{1 / 6}  \left\| \Lambda^{1 / 2} \nabla \omega
    \right\|_{L^2}^{5 / 6} ;  \label{eq:H2-est-01}\\
    \left\| \nabla \omega \right\|_{L^3} & \leqslant & C \left\| \nabla \omega
    \right\|^{1 / 3}_{L^2}  \left\| \Lambda^{1 / 2} \nabla \omega
    \right\|_{L^2}^{2 / 3} ;  \label{eq:H2-est-02}\\
    \left\| \omega \right\|_{L^3} & \leqslant & C \left\| \omega
    \right\|_{L^2}^{7 / 9}  \left\| \Lambda^{1 / 2} \nabla \omega
    \right\|_{L^2}^{2 / 9} .  \label{eq:H2-est-03}
  \end{eqnarray}
  Equations (\ref{eq:H2-est-01}) and (\ref{eq:H2-est-02}) imply
  \begin{equation}
    \left\| \nabla \omega \right\|_{L^3} = \left\| \nabla \omega
    \right\|_{L^3}^{2 / 3}  \left\| \nabla \omega \right\|_{L^3}^{1 / 3}
    \leqslant C \left\| \Lambda^{1 / 2} \omega \right\|_{L^2}^{1 / 9}  \left\|
    \nabla \omega \right\|_{L^2}^{1 / 9}  \left\| \Lambda^{1 / 2} \nabla
    \omega \right\|_{L^2}^{7 / 9} . \label{eq:H2-est-04}
  \end{equation}
  Now (\ref{eq:H2-est-03}) and (\ref{eq:H2-est-04}) gives
  \begin{equation}
    I_1 \leqslant C \left\| \omega \right\|_{L^3}  \left\| \nabla \omega
    \right\|_{L^3}^2 \leqslant C \left\| \omega \right\|_{L^2}^{7 / 9} 
    \left\| \Lambda^{1 / 2} \omega \right\|_{L^2}^{2 / 9}  \left\| \nabla
    \omega \right\|_{L^2}^{2 / 9}  \left\| \Lambda^{1 / 2} \nabla \omega
    \right\|_{L^2}^{16 / 9}.
  \end{equation}
%cvt6 correct typo D --> \nabla in the above equation.
  Applying Young's inequality we get
  \begin{equation}
    I_1 \leqslant C \left( \varepsilon \right)  \left\| \omega
    \right\|_{L^2}^7  \left\| \Lambda^{1 / 2} \omega \right\|_{L^2}^2  \left\|
    \nabla \omega \right\|_{L^2}^2 + \varepsilon \left\| \Lambda^{1 / 2}
    \nabla \omega \right\|_{L^2}^2 .
  \end{equation}
  Here $\varepsilon$ can be taken as small as we want and will be specified
  later.
  
  \item Estimating $I_2 = I_4 = \int_{\mathbbm{R}^2} \left| \nabla b \right| 
  \left| \nabla j \right|  \left| \nabla \omega \right| \mathd x$.
  
  Using H\"older's inequality we have
  \begin{equation}
    \int_{\mathbbm{R}^2} \left| \nabla b \right|  \left| \nabla j \right| 
    \left| \nabla \omega \right| \mathd x \leqslant \left\| \nabla b
    \right\|_{L^4}  \left\| \nabla j \right\|_{L^4}  \left\| \nabla \omega
    \right\|_{L^2} \leqslant C \left\| j \right\|_{L^4} \left\| \nabla j
    \right\|_{L^4}  \left\| \nabla \omega \right\|_{L^2} .
  \end{equation}
  Applying the Gagliardo-Nirenberg inequalities
  \begin{equation}
    \left\| j \right\|_{L^4} \leqslant C \left\| j \right\|_{L^2}^{1 / 2} 
    \left\| \nabla j \right\|_{L^2}^{1 / 2} ; \hspace{2em} \left\| \nabla j
    \right\|_{L^4} \leqslant C \left\| \nabla j \right\|_{L^2}^{1 / 2} 
    \left\| \Lambda \nabla j \right\|_{L^2}^{1 / 2} \label{GN}
  \end{equation}
  yields
  \begin{equation}
    \int_{\mathbbm{R}^2} \left| \nabla b \right|  \left| \nabla j \right| 
    \left| \nabla \omega \right| \mathd x \leqslant C \left\| j
    \right\|_{L^2}^{1 / 2}  \left\| \nabla j \right\|_{L^2}  \left\| \Lambda
    \nabla j \right\|_{L^2}^{1 / 2}  \left\| \nabla \omega \right\|_{L^2} .
  \end{equation}
  Applying Young's inequality further yields
  \begin{equation}
    \int_{\mathbbm{R}^2} \left| \nabla b \right|  \left| \nabla j \right| 
    \left| \nabla \omega \right| \mathd x \leqslant C \left( \varepsilon
    \right) \left\| j \right\|_{L^2}^2 + \left\| \nabla j \right\|_{L^2}^2 
    \left\| \nabla \omega \right\|_{L^2}^2 + \varepsilon \left\| \Lambda
    \nabla j \right\|_{L^2}^2 .
  \end{equation}

  \item Estimating $I_3 = \int_{\mathbbm{R}^2} \left| \nabla u \right|  \left|
  \nabla j \right|^2 \mathd x$.
  
  Using H\"older's inequality we have
  \begin{equation}
    \int_{\mathbbm{R}^2} \left| \nabla u \right|  \left| \nabla j \right|^2
    \mathd x \leqslant \left\| \nabla u \right\|_{L^2}  \left\| \nabla j
    \right\|_{L^4}^2 \leqslant C \left\| \omega \right\|_{L^2}  \left\| \nabla
    j \right\|_{L^4}^2 .
  \end{equation}
%cvt7 This Gagliardo-Nirenberg inequality appears in (54), so referring to
%it rather than restating it.
  Now using the second Gagliardo-Nirenberg inequality in (\ref{GN})
  and Young's inequality we get
%cvt7 end
  \begin{equation}
    I_3 \leqslant C \left\| \omega \right\|_{L^2}  \left\| \nabla j
    \right\|_{L^2}  \left\| \Lambda \nabla j \right\|_{L^2} \leqslant C \left(
    \varepsilon \right)  \left\| \omega \right\|_{L^2}^2  \left\| \nabla j
    \right\|_{L^2}^2 + \varepsilon \left\| \Lambda \nabla j \right\|_{L^2}^2 .
  \end{equation}
  \item Estimating $I_5 = \int_{\mathbbm{R}^2} \left[ \left| \nabla^2 u \right| 
  \left| \nabla b \right| + \left| \nabla u \right|  \left| \nabla^2 b \right|
  \right]  \left| \nabla j \right| \mathd x$.
  
  We write
  \begin{equation}
    I_5 = I_{51} + I_{52} \assign \int_{\mathbbm{R}^2} \left| \nabla^2 u
    \right|  \left| \nabla b \right|  \left| \nabla j \right| \mathd x +
    \int_{\mathbbm{R}^2} \left| \nabla u \right|  \left| \nabla^2 b \right| 
    \left| \nabla j \right| \mathd x.
  \end{equation}
  It is clear that $I_{51}$ can be estimated similar to $I_2$ while $I_{52}$
  can be estimated similar to $I_3$. 
\end{itemize}
\begin{remark}
  \label{rem:I2-I5}We would like to emphasize that the assumption $\alpha
  \geqslant 1 / 2$ is only needed for the estimation of $I_1$. The estimates
  $I_2 - I_5$ only require $\alpha \geqslant 0, \beta \geqslant 1$. 
\end{remark}

Putting the above results together, we have
\begin{eqnarray}
  \frac{\mathd}{\mathd t} \left( \left\| \nabla \omega \right\|_{L^2}^2 +
  \left\| \nabla j \right\|_{L^2}^2 \right) & \leqslant & C \left( \varepsilon
  \right)  \left[ \left\| \omega \right\|_{L^2}^7  \left\| \Lambda^{1 / 2}
  \omega \right\|_{L^2}^2 + \left\| \omega \right\|_{L^2}^2 + \left\| \nabla j
  \right\|_{L^2}^2 + 1 \right]  \left( \left\| \nabla \omega \right\|_{L^2}^2
  + \left\| \nabla j \right\|_{L^2}^2 \right) \nonumber\\
  &  & + C \left( \varepsilon \right)  \left\| j \right\|_{L^2}^2 - 2 \left\|
  \Lambda^{1 / 2} \nabla \omega \right\|_{L^2}^2 -2 \left\| \Lambda \nabla j
  \right\|_{L^2}^2 \nonumber\\
  &  & + C \varepsilon \left( \left\| \Lambda^{1 / 2} \nabla \omega
  \right\|_{L^2}^2 + \left\| \Lambda \nabla j \right\|_{L^2}^2 \right) . 
\end{eqnarray}
Taking $\varepsilon$ small enough so that $C \varepsilon < 1$ we have
\begin{eqnarray}
  \frac{\mathd}{\mathd t} \left( \left\| \nabla \omega \right\|_{L^2}^2 +
  \left\| \nabla j \right\|_{L^2}^2 \right) & \leqslant & C \left( \varepsilon
  \right)  \left[ \left\| \omega \right\|_{L^2}^7  \left\| \Lambda^{1 / 2}
  \omega \right\|_{L^2}^2 + \left\| \omega \right\|_{L^2}^2 + \left\| \nabla j
  \right\|_{L^2}^2 + 1 \right]  \left( \left\| \nabla \omega \right\|_{L^2}^2
  + \left\| \nabla j \right\|_{L^2}^2 \right) \nonumber\\
  &  & + C \left( \varepsilon \right)  \left\| j \right\|_{L^2}^2 - 
  \left( \left\| \Lambda^{1 / 2} \nabla \omega \right\|_{L^2}^2 + \left\|
  \Lambda \nabla j \right\|_{L^2}^2 \right). \label{eq:est-H2}
\end{eqnarray}
Recall that
\begin{equation}
  \left\| \Lambda^{1 / 2} \omega \right\|_{L^2}, \left\| \nabla j
  \right\|_{L^2} \in L^2 \left( 0, T \right) ; \hspace{2em} \left\| \omega
  \right\|_{L^2}, \left\| j \right\|_{L^2} \in L^{\infty} \left( 0, T \right)
\end{equation}
thanks to the $H^1$ estimate. This, together with (\ref{eq:est-H2}), 
implies
\begin{equation}
  \nabla \omega, \nabla j \in L^{\infty} \left( 0, T ; L^2 \right) .
\end{equation}
Combining with the $H^1$ estimate, we have the following $H^2$ estimate:
\begin{equation}
  \left\| \omega \right\|_{H^1} + \left\| j \right\|_{H^1} \in L^{\infty}
  \left( 0, T \right) .
\end{equation}
\subsection{$H^k$ estimates}\label{subsec:Hk-est}

An argument which by now is standard (see for example {\cite{Lei2009}}) 
generalizes the
classical BKM-type blow-up criterion ({\cite{Caflisch1997}}) to
\begin{equation}
  \text{The MHD system stays regular beyond }T\text{ if and only if } \int_0^T \left( \left\| \omega
  \right\|_{\tmop{BMO}} + \left\| j \right\|_{\tmop{BMO}} \right)\mathd t <\infty. 
\end{equation}
Using the embedding
\begin{equation}
  H^1 \longhookrightarrow \tmop{BMO}
\end{equation}
in 2D, we see that
\begin{equation}
  \left\| \omega \right\|_{H^1} + \left\| j \right\|_{H^1} \in L^{\infty}
  \left( 0, T \right) \Longrightarrow \left\| \omega \right\|_{\tmop{BMO}} +
  \left\| j \right\|_{\tmop{BMO}} \in L^{\infty} \left( 0, T \right)
\end{equation}
and consequently all $H^k$ norms are bounded. This completes the proof of the
first case.

\section{Proof of Theorem \ref{thm:global} Case II: $0 \leqslant \alpha < 1 / 2,
2 \alpha + \beta > 2$. }\label{Sec:Case II}

To prove global regularity in this case, we first derive a blow-up criterion
in $\left\| \omega \right\|_{L^p}$ for appropriate $p$, then obtain a priori
estimate for $\left\| \omega \right\|_{L^p}$. Note that in this case we have
$\beta > 1$ and Lemma \ref{lem:H1-est} together with the embedding
\begin{equation}
  H^{\beta} \longhookrightarrow L^{\infty}
\end{equation}
in 2D already gives $j \in L^2 \left( 0, T ; L^{\infty} \right) \longhookrightarrow
L^1 \left( 0, T ; \tmop{BMO} \right)$.

\begin{lemma}
  \label{lem:blowup-omega-p}Assume $0< \alpha <1/2, \beta > 1$. The GMHD
  system (\ref{eq:GMHD-u}--\ref{eq:GMHD-div}) is regular if $\omega \in L^p$ 
  for any $p > \frac{1}{\alpha}$. 
\end{lemma}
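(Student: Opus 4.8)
The plan is to establish a blow-up criterion of Beale–Kato–Majda type in terms of $\|\omega\|_{L^p}$ alone, exploiting the fact that when $\beta>1$ we already control $b$ in $L^2(0,T;H^\beta)\hookrightarrow L^2(0,T;W^{1,\infty})$ from Lemma \ref{lem:H1-est}. Since the excerpt has already noted that $j\in L^1(0,T;\mathrm{BMO})$, by the generalized BKM criterion stated in Section \ref{subsec:Hk-est} it suffices to show that $\int_0^T\|\omega\|_{\mathrm{BMO}}\,\mathd t<\infty$, and this will follow once we bootstrap from $\omega\in L^\infty(0,T;L^p)$ to higher regularity. So the real content is: assuming $\omega\in L^\infty(0,T;L^p)$ with $p>1/\alpha$, deduce $\|\omega\|_{\mathrm{BMO}}\in L^1(0,T)$ (indeed that all $H^k$ norms stay bounded).

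First I would write the vorticity equation $\omega_t+u\cdot\nabla\omega=b\cdot\nabla j-\Lambda^{2\alpha}\omega$ and do an $L^q$ energy estimate, multiplying by $|\omega|^{q-2}\omega$ and integrating. The transport term drops by incompressibility; the dissipation term $\int (\Lambda^{2\alpha}\omega)|\omega|^{q-2}\omega\,\mathd x$ is nonnegative (positivity of the fractional Laplacian in $L^q$, as in Córdoba–Córdoba) and in fact gives a coercive contribution controlling $\|\Lambda^{\alpha}(|\omega|^{q/2})\|_{L^2}^2$ up to constants. The forcing term $\int (b\cdot\nabla j)|\omega|^{q-2}\omega\,\mathd x$ is handled by Hölder: bound it by $\|b\|_{L^\infty}\|\nabla j\|_{L^2}\|\,|\omega|^{q-1}\|_{L^2}=\|b\|_{L^\infty}\|\nabla j\|_{L^2}\|\omega\|_{L^{2(q-1)}}^{q-1}$. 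Here $b\in L^\infty$ and $\nabla j=\nabla\nabla^\bot b\in L^2$ are available time-integrated in $L^2$ from Lemma \ref{lem:H1-est} together with $H^\beta\hookrightarrow L^\infty$; interpolating $\|\omega\|_{L^{2(q-1)}}$ between $\|\omega\|_{L^q}$ and the coercive quantity $\|\Lambda^\alpha(|\omega|^{q/2})\|_{L^2}^{2/q}\sim\|\,|\omega|^{q/2}\|_{\dot H^\alpha}^{2/q}$, and using $\dot H^\alpha(\mathbb{R}^2)\hookrightarrow L^{2/(1-\alpha)}$, one checks that the exponents close — this is exactly where $p>1/\alpha$ enters, ensuring the power of the dissipation term produced by interpolation is $<2$ so Young's inequality absorbs it. The outcome is a differential inequality $\frac{\mathd}{\mathd t}\|\omega\|_{L^q}^q+c\,\|\Lambda^\alpha(|\omega|^{q/2})\|_{L^2}^2\le C(t)\big(1+\|\omega\|_{L^q}^q\big)$ with $C\in L^1(0,T)$, giving $\omega\in L^\infty(0,T;L^q)\cap(\text{a fractional-gradient space in time})$ for every finite $q\ge p$.

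Next, having $\omega\in L^\infty(0,T;L^q)$ for arbitrarily large $q$, I would upgrade to a bound on $\|\omega\|_{L^\infty}$ or at least on $\|\omega\|_{\mathrm{BMO}}$, integrated in time. One clean route: from the gained space-time control $\int_0^T\|\Lambda^\alpha(|\omega|^{q/2})\|_{L^2}^2\,\mathd t<\infty$ and Sobolev embedding in 2D, $|\omega|^{q/2}\in L^2(0,T;L^{2/(1-\alpha)})$, i.e. $\omega\in L^q(0,T;L^{q/(1-\alpha)})$; iterating the $L^q$ estimate with this improved integrability (or alternatively using the smoothing of the linear part $\partial_t+\Lambda^{2\alpha}$) pushes integrability up to $\omega\in L^r(0,T;L^\infty)$ for some $r>1$, hence $\omega\in L^1(0,T;\mathrm{BMO})$. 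Combined with $j\in L^1(0,T;\mathrm{BMO})$ already in hand, the BKM-type criterion closes the argument and all $H^k$ norms remain finite.

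The main obstacle is the forcing-term estimate and the verification that the interpolation exponents close precisely under the hypothesis $p>1/\alpha$: the term $b\cdot\nabla j$ is genuinely at the level of one derivative of $b$, and its only redemption is the time-integrability $\nabla j\in L^2(0,T;L^2)$ plus $b\in L^\infty$, so the whole scheme is forced to live with a coefficient in $L^1(0,T)$ rather than $L^\infty$, and one must be careful that the Gronwall argument still yields an $L^\infty$-in-time $L^q$ bound. A secondary subtlety is the rigorous justification of the positivity/coercivity lower bound $\int(\Lambda^{2\alpha}\omega)|\omega|^{q-2}\omega\,\mathd x\gtrsim\|\Lambda^\alpha(|\omega|^{q/2})\|_{L^2}^2$ for $0<\alpha<1/2$; this is a known pointwise inequality (Córdoba–Córdoba / Ju) but should be invoked explicitly. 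Everything else is routine Hölder–Gagliardo–Nirenberg–Young bookkeeping.
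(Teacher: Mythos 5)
Your route is genuinely different from the paper's (you bootstrap $L^q$ bounds on $\omega$ from the vorticity equation and push $q\to\infty$; the paper runs a conditional $H^1$ energy estimate for $(\omega,j)$), and it breaks at exactly the step you flag as the main obstacle. After Hölder you must control $\|\omega\|_{L^{2(q-1)}}$ by interpolating between $\|\omega\|_{L^q}$ and whatever the dissipation supplies. The coercive term $\|\Lambda^{\alpha}(|\omega|^{q/2})\|_{L^2}^2$, via $\dot H^{\alpha}(\mathbb{R}^2)\hookrightarrow L^{2/(1-\alpha)}$, controls at best $\|\omega\|_{L^{q/(1-\alpha)}}$, so the interpolation requires $2(q-1)\le q/(1-\alpha)$, i.e.\ $q\le 2(1-\alpha)/(1-2\alpha)$. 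For $\alpha\in(0,1/2)$ this is a finite cap (tending to $2$ as $\alpha\to 0$), so you can never take $q\to\infty$, and for $\alpha<1-1/\sqrt{2}$ you cannot even reach $q=1/\alpha$. The hypothesis $\omega\in L^{\infty}(0,T;L^p)$ with $p>1/\alpha$ never actually appears in your displayed estimate, so the exponents do not ``close because $p>1/\alpha$''; they simply do not close. A structural sanity check confirms this: your scheme uses only $\alpha>0$, $\beta>1$, so if it worked it would prove unconditional global regularity in that whole range without any hypothesis on $\omega$ --- strictly stronger than what the paper obtains (it needs $2\alpha+\beta>2$ precisely to verify the hypothesis of this lemma). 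Replacing $\|\nabla j\|_{L^2}$ by $\|\nabla j\|_{L^q}$ to avoid the $L^{2(q-1)}$ norm does not help either, since $H^{\beta-1}\hookrightarrow L^q$ forces $q\le 2/(2-\beta)$, an extra restriction on $\beta$ beyond $\beta>1$.

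The missing idea is where to spend the hypothesis. The paper does not estimate $\|\omega\|_{L^q}$ at all: it goes directly for $\nabla\omega,\nabla j\in L^{\infty}(0,T;L^2)$ by revisiting the terms $I_1,\dots,I_5$ of Section \ref{subsec:H2-est}, noting (Remark \ref{rem:I2-I5}) that only $I_1=\int|\nabla u||\nabla\omega|^2\,\mathd x$ used $\alpha\ge 1/2$. It re-estimates $I_1\le\|\omega\|_{L^{p_1}}\|\nabla\omega\|_{L^{2q_1}}^2$ and, through Gagliardo--Nirenberg inequalities built on $\|\Lambda^{\alpha}\omega\|_{L^2}$, $\|\nabla\omega\|_{L^2}$ and $\|\Lambda^{\alpha}\nabla\omega\|_{L^2}$, arrives at $I_1\le C(\varepsilon)\|\omega\|_{L^p}^{(1-2a)/a}\|\Lambda^{\alpha}\omega\|_{L^2}^2\|\nabla\omega\|_{L^2}^2+\varepsilon\|\Lambda^{\alpha}\nabla\omega\|_{L^2}^2$; the last term is absorbed by the dissipation, while the Gronwall coefficient is integrable because $\|\Lambda^{\alpha}\omega\|_{L^2}^2\in L^1(0,T)$ from Lemma \ref{lem:H1-est} and $\|\omega\|_{L^p}$ is bounded \emph{by hypothesis} --- this is the one and only place $p>1/\alpha$ is used, to make the Gagliardo--Nirenberg exponents admissible. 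The resulting $H^1$ bound on $(\omega,j)$ gives $\omega,j\in L^{\infty}(0,T;\mathrm{BMO})$ via $H^1\hookrightarrow\mathrm{BMO}$, and the BKM-type criterion concludes. You would need to restructure your argument along these lines; as written, the bootstrap cannot be completed.
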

%cvt8 correct typo $\omega \in L^{\infty} L^p$ --> $\omega \in L^p$ in 
%the above lemma.

\begin{proof}
  As we have $\beta > 1$, we already have the following $H^1$ estimates thanks
  to Lemma \ref{lem:H1-est}:
  \begin{equation}
    \omega \in L^{\infty} \left( 0, T ; L^2 \right) \cap L^2 \left( 0, T ;
    H^{\alpha} \right) ; \hspace{2em} j \in L^{\infty} \left( 0, T ; L^2
    \right) \cap L^2 \left( 0, T ; H^{\beta} \right) .
  \end{equation}
  Now arguing similarly as in Sections \ref{subsec:H2-est} and \ref{subsec:Hk-est}, we
  see that all we need to do is to bound $I_1 - I_5$ as defined in
  (\ref{eq:I1}--\ref{eq:I5}). Furthermore, we note that the estimates for $I_2
  - I_5$ can be done similarly to that in Section \ref{subsec:H2-est}, as explained in
  Remark \ref{rem:I2-I5}. The only estimate that needs to be done differently
  is that of $I_1 = \int_{\mathbbm{R}^2} \left| \omega \right|  \left| \nabla
  \omega \right|^2 \mathd x$.
  
  For that purpose, we first apply H\"older's inequality to $I_1$ to obtain
  \begin{equation}
    \int_{\mathbbm{R}^2} \left| \omega \right|  \left| \nabla \omega \right|^2
    \mathd x \leqslant \left\| \omega \right\|_{L^{p_1}}  \left\| \nabla
    \omega \right\|_{L^{2 q_1}}^2
  \end{equation}
  for $p_1, q_1$ satisfy
  \begin{equation}
    p_1 > \frac{1}{\alpha}, \hspace{2em} \frac{1}{p_1} + \frac{1}{q_1} = 1.
  \end{equation}
  Next we use the following Gagliardo-Nirenberg type inequalities:
  \begin{eqnarray}
    \left\| \nabla \omega \right\|_{L^{2 q_1}} & \leqslant & C \left\|
    \Lambda^{\alpha} \omega \right\|_{L^2}^{\xi}  \left\| \Lambda^{\alpha}
    \nabla \omega \right\|_{L^2}^{1 - \xi} \text{\tmop{with} } \xi = \alpha -
    \frac{1}{p_1} = \alpha \left( 1 - \frac{1}{p_1 \alpha} \right) ; \\
    \left\| \nabla \omega \right\|_{L^{2 q_1}} & \leqslant & C \left\| \nabla
    \omega \right\|_{L^2}^{\eta} \left\| \Lambda^{\alpha} \nabla \omega
    \right\|_{L^2}^{1 - \eta} \text{\tmop{with} }  \eta = 1 - \frac{1}{p_1
    \alpha} . 
  \end{eqnarray}
  Note that as long as $p_1 > \frac{1}{\alpha}$ both $\xi, \eta \in \left( 0,
  1 \right)$. Now setting
  \begin{equation}
    a = \frac{\alpha}{1 + \alpha}  \left( 1 - \frac{1}{p_1 \alpha} \right),  
  \label{eq:def-a}
  \end{equation}
which satisfies $0<a<1/3$ owing to $0<\alpha<1/2$ and $p_1>1/\alpha$, we have
  \begin{equation}
    \left\| \nabla \omega \right\|_{L^{2 q_1}} = \left\| \nabla \omega
    \right\|_{L^{2 q_1}}^{1 / \left( 1 + \alpha \right)}  \left\| \nabla
    \omega \right\|_{L^{2 q_1}}^{\alpha / \left( 1 + \alpha \right)} \leqslant
    C \left\| \Lambda^{\alpha} \omega \right\|_{L^2}^a  \left\| \nabla \omega
    \right\|_{L^2}^a \left\| \Lambda^{\alpha} \nabla \omega \right\|_{L^2}^{1
    - 2 a} .
  \end{equation}
  Next we apply the following Gagliardo-Nirenberg inequality
  \begin{equation}
    \left\| \omega \right\|_{L^{p_1}} \leqslant C \left\| \omega
    \right\|_{L^p}^{1 - 2 a}  \left\| \Lambda^{\alpha} \nabla \omega
    \right\|_{L^2}^{2 a},
  \end{equation}
  where $a$ is given by (\ref{eq:def-a}) and $p < p_1$. The exact value 
  of $p$ can be written down but what is important here is that 
  $p > \frac{1}{\alpha}$, as can be seen from the following manipulation 
  of the scaling relation:
  \begin{equation}
    - \frac{2}{p_1} = \left( 1 - 2 a \right)  \left( - \frac{2}{p} \right) + 2
    a \alpha \Longrightarrow - \frac{1}{p_1} = \left( 1 - 2 a \right)  \left(
    - \frac{1}{p} \right) + a \alpha . \label{eq:scaling-relation}
  \end{equation}
  Writing (\ref{eq:def-a}) as $a = \frac{1}{1 + \alpha}  \left( \alpha -
  \frac{1}{p_1} \right)$ and then adding $\alpha$ to both sides of
  (\ref{eq:scaling-relation}), we reach
  \begin{equation}
    \alpha - \frac{1}{p} = \frac{1 - 3 \alpha / \left( \alpha + 1 \right)}{1 -
    2 a}  \left( \alpha - \frac{1}{p_1} \right) .
  \end{equation}
% cvt9 Please double check! My calculations have failed to arrive at the 
% above equation. The closest form I can get is 
% \alpha - \frac{1}{p} = \frac{2}{1+\alpha}\left(\frac{1}{2}-\frac{1}{p}\right)
% \left(\alpha - \frac{1}{p_1}\right). This equation does not seem to support
% your arguments.  
  Recalling $\alpha < 1 / 2$, we see that $\alpha - 1 / p > 0$ if and 
  only if $\alpha - 1 / p_1 > 0$.
  
  Combining the above, and applying Young's inequality, we see that $I_1$ 
  can be bounded as
  \begin{eqnarray}
    I_1 \leqslant \left\| \omega \right\|_{L^{p_1}}  \left\| \nabla \omega
    \right\|_{L^{2 q_1}}^2 & \leqslant & C \left\| \omega \right\|_{L^p}^{1 - 2
    a}  \left( \left\| \Lambda^{\alpha} \omega \right\|_{L^2}^a  \left\|
    \nabla \omega \right\|_{L^2}^a \left\| \Lambda^{\alpha} \nabla \omega
    \right\|_{L^2}^{1 - a} \right)^2 \nonumber\\
    & \leqslant & C \left( \varepsilon \right)  \left\| \omega
    \right\|_{L^p}^{\left( 1 - 2 a \right) / a}  \left\| \Lambda^{\alpha}
    \omega \right\|_{L^2}^2  \left\| \nabla \omega \right\|_{L^2}^2 +
    \varepsilon \left\| \Lambda^{\alpha} \nabla \omega \right\|_{L^2}^2 . 
  \end{eqnarray}
  Now it is clear that once $\left\| \omega \right\|_{L^p} \in L^{\infty}
  \left( 0, T \right)$, we can obtain $H^2$ estimate as in Section
  \ref{subsec:H2-est}, and global regularity follows as in
Section   \ref{subsec:Hk-est}.
  
  Finally, if $\left\| \omega \right\|_{L^q}$ is bounded for some $q >
  \frac{1}{\alpha} > 2$, then together with the $H^1$ estimate $\omega \in
  L^{\infty} \left( 0, T ; L^2 \right)$ we see that
  \begin{equation}
    \left\| \omega \right\|_{L^r} \in L^{\infty} \left( 0, T \right)
    \hspace{2em} \forall r \in \left[ 2, q \right] .
  \end{equation}
  Now we can simply take $p_1 = q$ in the above inequalities, then since $p < p_1$ we have the uniform
  boundedness of $\left\| \omega \right\|_{L^p}$ and global regularity
  follows. 
\end{proof}

\begin{remark}
  The case $\alpha = 0$ (which we identify with the case $\nu = 0$) is
  trivial. By our assumption $2 \alpha + \beta > 2$ we have $\beta > 2$, 
  which gives $\nabla j \in L^2 \left( 0, T ; L^{\infty} \right)$. This
  result, together with the vorticity equation
  \begin{equation}
    \omega_t + u \cdot \nabla \omega = b \cdot \nabla j,
  \end{equation}
  implies $\omega \in L^{\infty} \left( 0, T ; L^{\infty} \right)$. 
  Global regularity then follows from the BKM type criterion in
  {\cite{Caflisch1997}}.
\end{remark}

In light of Lemma \ref{lem:blowup-omega-p}, all we need to do is to show that
when $2 \alpha + \beta > 2$, there is indeed $p > \frac{1}{\alpha}$ such that
$\left\| \omega \right\|_{L^p}$ remains uniformly bounded over $\left( 0, T
\right)$.

Recall the equation for $\omega$:
\begin{equation}
  \omega_t + u \cdot \nabla \omega = b \cdot \nabla j - \Lambda^{\alpha}
  \omega .
\end{equation}
Multiply both sides by $p \left| \omega \right|^{p - 2} \omega$ and integrate
we reach
\begin{equation}
  \frac{\mathd}{\mathd t} \int_{\mathbbm{R}^2} \left| \omega \right|^p \mathd x
  \leqslant p \int_{\mathbbm{R}^2} \left| b \right|  \left| \nabla j \right| 
  \left| \omega \right|^{p - 1} \mathd x - p \int_{\mathbbm{R}^2} \left(
  \Lambda^{\alpha} \omega \right)  \left| \omega \right|^{p - 2} \omega \mathd
  x.
\end{equation}
after taking advantage of $\nabla \cdot u = 0$.

For the dissipation term, it is well-known that
\begin{equation}
  \int_{\mathbbm{R}^2} \left( \Lambda^{\alpha} \omega \right)  \left| \omega
  \right|^{p - 2} \omega \mathd x \geqslant 0.
\end{equation}
This is originally proved in {\cite{Resnick1995}}, and has later been refined
in {\cite{Cordoba2004}}, {\cite{Ju2005}}.

Taking into account the above ``positivity'' property and using H\"older's
inequality, we obtain
\begin{equation}
  \frac{\mathd}{\mathd t} \left\| \omega \right\|_{L^p} \leqslant \left\| b
  \cdot \nabla j \right\|_{L^p} \leqslant \left\| b \right\|_{L^{\infty}} 
  \left\| \nabla j \right\|_{L^p} .
\end{equation}
Now as $\beta > 1$, we have $H^1$ estimate as in \ref{subsec:H1-est}. In
particular we have
\begin{equation}
  j \in L^2  \left( 0, T ; H^{\beta} \right) .
\end{equation}
Sobolev embedding then gives
\begin{equation}
  j \in L^2  \left( 0, T ; H^{\beta} \right) \Longrightarrow \nabla
  j \in L^2 \left( 0, T ; L^p \right)\text{ and }
  b\in L^2(0,T;L^\infty). 
\end{equation}
with $p>\frac{1}{\alpha}$ satisfying
\begin{equation}
  p \leqslant \frac{2}{2 - \beta} \text{ when }\beta<2, \text{ and }
  p < \infty \text{ when }\beta\geqslant 2.
\end{equation}
As $\alpha+\beta>2$, such $p$ exists. Now we have
\begin{equation}
  \left\| \omega \right\|_{L^p} \leqslant \left\| \omega_0 \right\|_{L^p} +
  \int_0^t \left\| b \right\|_{L^{\infty}}  \left\| \nabla j \right\|_{L^p}
  \mathd \tau \leqslant \left\| \omega_0 \right\|_{L^p} + \left\| b
  \right\|_{L^2 \left( 0, T ; L^{\infty} \right)}  \left\| \nabla j
  \right\|_{L^2 \left( 0, T ; L^p \right)} \leqslant C \left( \omega_0, T
  \right) .
\end{equation}
Therefore $\left\| \omega \right\|_{L^p} \in L^{\infty} \left( 0, T \right)$
and global regularity follows from Lemma \ref{lem:blowup-omega-p}.

\section{Proof of Theorem \ref{thm:global} Case III: $\alpha \geqslant 2, \beta =
0$.}\label{Sec:alpha=2}

In this section we prove global regularity in the case $\alpha \geqslant 2,
\beta = 0$. As we identify $\beta = 0$ with $\kappa = 0$, the GMHD equations
now reads
%cvt10 I have made the change \triangle^2 --> \Lambda^{2 \alpha}.
\begin{eqnarray}
  u_t + u \cdot \nabla u & = & - \nabla p + b \cdot \nabla b - 
  \Lambda^{2 \alpha} u, 
  \label{eq:GMHD-u-alpha=2}\\
  b_t + u \cdot \nabla b & = & b \cdot \nabla u,  \label{eq:GMHD-b-alpha=2}\\
  \nabla \cdot u = \nabla \cdot b & = & 0.  \label{eq:GMHD-div-alpha=2}
\end{eqnarray}
In what follows we will only present the proof for the case $\alpha = 2, \beta
= 0$. The case $\alpha > 2$ can be dealt with using the idea in Remark
\ref{rem:reduction}. In fact it can also be proved following standard energy
estimates similar to that in Section \ref{sec:Case I}, as when $\alpha > 2$ we
immediately have $\omega \in L^2 \left( 0, T ; L^{\infty} \right)$. This leads
to a priori $H^1$ bounds which are sufficient to prove a priori $H^2$ bounds.

We will show that when $\alpha\geqslant 2$, the $H^2$ norms of $\omega$ and $j$ must stay finite for any $T>0$. Once this is proved, Sobolev embedding immediately gives the finiteness of $\left\| \omega\right\|_{L^\infty}$ and $\left\| j\right\|_{L^\infty}$ and regularity follows. The $H^2$ bound is proved by contradiction: Assume $\limsup_{t \nearrow T} \left\| \omega
\right\|_{H^2} + \left\| j \right\|_{H^2} = \infty$ for some finite time $T >
0$. The idea is to start from a time $T_0$ close enough to $T$ and show that
under such assumption $\left\| \omega \right\|_{H^2} + \left\| j
\right\|_{H^2}$ remains uniformly bounded for $T_0 < t < T$, thus reaching a
contradiction.

First observe that in this case, energy conservation gives
\begin{equation}
  u, b \in L^{\infty}  \left( 0, T ; L^2 \right), \hspace{2em} \triangle u \in
  L^2 \left( 0, T ; L^2 \right) \Longrightarrow \nabla u, \omega \in L^2 \left( 0, T ;
  \tmop{BMO} \right) \longhookrightarrow L^1 \left( 0, T ; \tmop{BMO} \right)
  .
\end{equation}
\subsection{$H^1$ Estimates}\label{subsec:alpha=2-H1-est}

Similar to Section \ref{subsec:H1-est}, we have
\begin{equation}
  \frac{1}{2}  \frac{\mathd}{\mathd t} \left( \left\| \omega \right\|_{L^2}^2
  + \left\| j \right\|_{L^2}^2 \right) + \left\| \triangle \omega
  \right\|_{L^2}^2 \leqslant \left| \int_{\mathbb{R}^2} T \left( \nabla u, \nabla b \right) j
  \mathd x \right|
\end{equation}
% cvt11 I would drop the equation below as ``Recalling (\ref{eq:T-H1})'' is
% sufficient.
Recalling (\ref{eq:T-H1})
\begin{equation}
  T \left( \nabla u, \nabla b \right) = {\color{black} 2 \partial_1 b_1 
  \left( \partial_1 u_2 + \partial_2 u_1 \right) + 2 \partial_2 u_2  \left(
  \partial_1 b_2 + \partial_2 b_1 \right)}
\end{equation}
and using
\begin{equation}
  \left\| \nabla b \right\|_{L^2} \leqslant C \left\| j \right\|_{L^2},
\end{equation}
we have
\begin{equation}
  \left| \int_{\mathbb{R}^2} jT \left( \nabla u, \nabla b \right) \mathd x \right| \leqslant
  C \left\| \nabla u \right\|_{L^{\infty}}  \left\| j \right\|_{L^2}^2 .
\end{equation}
This gives
\begin{equation}
  \frac{\mathd}{\mathd t} \left( \left\| \omega \right\|_{L^2}^2 + \left\| j
  \right\|_{L^2}^2 \right) + 2 \left\| \triangle \omega \right\|_{L^2}^2
  \leqslant C \left\| \nabla u \right\|_{L^{\infty}} \left( \left\| \omega
  \right\|_{L^2}^2 + \left\| j \right\|_{L^2}^2 \right) .
  \label{eq:H1-alpha=2}
\end{equation}
Here we make use of the following Gronwall-type inequality, which is a variant
of the standard Gronwall's inequality as presented in {\cite{Evans1998}},
Appendix B.j.

\begin{lemma}
  \label{lem:Gronwall}Let $\eta \left( \cdot \right)$ be a nonnegative,
  absolutely continuous function on $\left[ 0, T \right]$, which satisfies for
  a.e. $t$ the inequality
  \begin{equation}
    \eta' \left( t \right) + \psi \left( t \right) \leqslant \phi \left( t
    \right) \eta \left( t \right),
  \end{equation}
  where $\phi \left( t \right)$ and $\psi \left( t \right)$ are nonnegative,
  summable functions on $\left[ 0, T \right]$. Then
  \begin{equation}
    \eta \left( t \right) + \int_0^t \psi \left( \tau \right) \mathd \tau
    \leqslant \eta \left( 0 \right) \exp \left[ \int_0^t \phi \left( \tau
    \right) \mathd \tau \right] .
  \end{equation}
  \begin{proof}
    The proof follows the same idea as that presented in {\cite{Evans1998}}
    and is omitted. 
  \end{proof}
\end{lemma}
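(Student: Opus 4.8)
The plan is to run the classical integrating-factor argument, paying only a little attention to the measure-theoretic hypotheses. First I would set $\Phi(t) := \int_0^t \phi(\tau)\,\mathd\tau$; since $\phi$ is nonnegative and summable, $\Phi$ is absolutely continuous and nondecreasing on $[0,T]$, with $\Phi'(t)=\phi(t)$ for a.e.\ $t$. Then I would introduce the auxiliary function $g(t) := \eta(t)\,e^{-\Phi(t)}$. Because $\eta$ is absolutely continuous and $e^{-\Phi(\cdot)}$ is absolutely continuous (the Lipschitz map $x\mapsto e^{-x}$ composed with the AC function $\Phi$, restricted to a bounded range), the product $g$ is absolutely continuous on $[0,T]$, hence differentiable a.e.\ with $g(t)-g(0)=\int_0^t g'(\tau)\,\mathd\tau$.

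Next I would compute, at a.e.\ $t$,
\begin{equation}
  g'(t) = \left( \eta'(t) - \phi(t)\eta(t) \right) e^{-\Phi(t)} \leqslant - \psi(t)\, e^{-\Phi(t)} \leqslant 0,
\end{equation}
where the first inequality uses the hypothesis $\eta'(t)+\psi(t)\leqslant\phi(t)\eta(t)$ together with the positivity of $e^{-\Phi(t)}$, and the second uses $\psi\geqslant 0$. Integrating from $0$ to $t$ and recalling $g(0)=\eta(0)$ yields
\begin{equation}
  \eta(t)\, e^{-\Phi(t)} + \int_0^t \psi(\tau)\, e^{-\Phi(\tau)}\, \mathd\tau \leqslant \eta(0).
\end{equation}

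Finally I would multiply through by $e^{\Phi(t)}>0$ to get
\begin{equation}
  \eta(t) + \int_0^t \psi(\tau)\, e^{\Phi(t) - \Phi(\tau)}\, \mathd\tau \leqslant \eta(0)\, e^{\Phi(t)},
\end{equation}
and then use that $\Phi$ is nondecreasing, so that $\Phi(t)-\Phi(\tau)\geqslant 0$ and hence $e^{\Phi(t)-\Phi(\tau)}\geqslant 1$ for $0\leqslant\tau\leqslant t$; replacing the exponential weight in the integral by $1$ gives exactly
\begin{equation}
  \eta(t) + \int_0^t \psi(\tau)\, \mathd\tau \leqslant \eta(0)\, \exp\left[ \int_0^t \phi(\tau)\, \mathd\tau \right].
\end{equation}
The only point requiring any care---rather than a genuine obstacle---is the justification that $g$ is absolutely continuous and that the product rule $g'=\eta' e^{-\Phi}-\eta\phi e^{-\Phi}$ holds a.e.\ under the stated regularity (only a.e.\ differentiability of $\eta$ and mere summability of $\phi$); this follows from standard facts about absolutely continuous functions on a compact interval, exactly as in the treatment in \cite{Evans1998}, which is why the authors simply refer to it.
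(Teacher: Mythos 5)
Your argument is correct and is exactly the standard integrating-factor proof that the paper has in mind when it refers to \cite{Evans1998} and omits the details; the extra $\psi$ term is handled properly by keeping it through the integration of $g'$ and then using $\psi\geqslant 0$ together with the monotonicity of $\Phi$ to discard the weight $e^{\Phi(t)-\Phi(\tau)}\geqslant 1$. The measure-theoretic points you flag (absolute continuity of the product and the a.e.\ product and chain rules) are indeed the only things to check, and they hold as you say.
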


Taking $\eta \assign \left\| \omega \right\|_{L^2}^2 + \left\| j
\right\|_{L^2}^2$ and $\psi \assign 2 \left\| \triangle \omega
\right\|_{L^2}^2$ in Lemma \ref{lem:Gronwall}, then integrating from
$T_0$ to $t$, we obtain
\begin{equation}
  \int_{T_0}^t \left\| \triangle \omega \right\|_{L^2}^2 \mathd \tau \leqslant
  \left\| \omega \right\|_{L^2}^2 + \left\| j \right\|_{L^2}^2 + \int_{T_0}^t
  \left\| \triangle \omega \right\|_{L^2}^2 \mathd \tau \leqslant \left( \left\|
  \omega_0 \right\|_{L^2}^2 + \left\| j_0 \right\|_{L^2}^2 \right) \exp \left[
  C \int_{T_0}^t \left\| \nabla u \right\|_{L^{\infty}} \left( \tau \right)
  \mathd \tau \right] .
\end{equation}
Here $T_0 \in \left( 0, T \right)$ will be fixed later and we denote 
$\omega_0:=\omega(\cdot,T_0),\ j_0:=j(\cdot,T_0)$.

Now applying the logarithmic inequality (see e.g. {\cite{Kozono2000a}})
\begin{equation}
  \left\| \nabla u \right\|_{L^{\infty}} \leqslant C \left( 1 + \left\| u
  \right\|_{L^2} + \left\| \omega \right\|_{\tmop{BMO}} \left( 1 + \log \left(
  1 + \left\| \omega \right\|_{H^2}^2 + \left\| j \right\|_{H^2}^2 \right)
  \right) \right) \label{eq:log-ineq}
\end{equation}
and setting
\begin{equation}
  M \left( t \right) \assign \max_{\tau \in \left( T_0, t \right)} \left(
  \left\| \omega \right\|_{H^2}^2 + \left\| j \right\|_{H^2}^2 \right) \left(
  \tau \right) \label{eq:def-M}
\end{equation}
we reach
\begin{equation}
  \int_{T_0}^t \left\| \triangle \omega \right\|_{L^2}^2 \mathd \tau \leqslant
  \left( \left\| \omega_0 \right\|_{L^2}^2 + \left\| j_0 \right\|_{L^2}^2
  \right) \exp \left[ C \left( 1 + \left\| u \right\|_{L^2} \right) \right]
  \exp \left[ C \left( \int_{T_0}^t \left\| \omega \right\|_{\tmop{BMO}}
  \mathd \tau \right) \left( 1 + \log \left( 1 + M \left( t \right) \right) 
  \right) \right] .
\end{equation}
Note that thanks to the energy estimate $\left\| u \right\|_{L^2} \leqslant
\left\| u(0) \right\|_{L^2}$ so $\exp \left( C \left( 1 +
\left\| u \right\|_{L^2} \right) \right)$ is bounded by an constant independent of $T_0$.

As $\left\| \omega \right\|_{\tmop{BMO}} \in L^1 \left( T_0, T \right)$,
we can take $T_0$ close enough to $T$ so that
\begin{equation}
  C \int_{T_0}^t \left\| \omega \right\|_{\tmop{BMO}} \mathd \tau \leqslant 2
  \delta
\end{equation}
for some small positive number $\delta$ to be fixed later. With such choice of
$T_0$ we have
\begin{equation}
  \int_{T_0}^t \left\| \triangle \omega \right\|_{L^2}^2 \mathd \tau \leqslant
  C \left( T_0 \right)  \left( 1 + M \left( t \right) \right)^{2 \delta} .
\end{equation}
Now H\"older's inequality gives
\begin{equation}
  \int_{T_0}^t \left\| \triangle \omega \right\|_{L^2} \mathd \tau \leqslant C
  \left( T_0 \right)  \left( 1 + M \left( t \right) \right)^{\delta} .
  \label{eq:triangle-omega-est}
\end{equation}
Before proceeding, we fix $T_0$ by the following requirements:
\begin{equation}
  C \int_{T_0}^t \left\| \omega \right\|_{\tmop{BMO}} \mathd \tau \leqslant 2
  \delta,\qquad \log(1+M(T_0))>1.
  \label{eq:Cond-T0}
\end{equation}
At the end of Section \ref{subsec:alpha=2-H3-est} we will show that $\delta$ can be
taken as $1 / 24$.

\subsection{$H^3$ estimate ($H^2$ estimate for $\omega,
j$)}\label{subsec:alpha=2-H3-est}

In this subsection we prove the uniform boundedness of $M \left( t \right)$
for all $T_0 < t < T$, thus reaching contradiction.

Let $\partial^2$ denote any double partial derivative (such as
$\partial_{12}, \partial_{11}$ etc.). Taking $\partial^2$ of
(\ref{eq:omega-H1}) and (\ref{eq:j-H1}) and multiplying the resulting 
equations by $\partial^2 \omega$ and $\partial^2 j$ respectively, we reach, 
after using $\nabla \cdot u = \nabla \cdot b = 0$,
\begin{equation}
  \frac{1}{2}  \frac{\mathd}{\mathd t} \int_{\mathbbm{R}^2} \left[ \left( \partial^2
  \omega \right)^2 + \left( \partial^2 j \right)^2 \right] \mathd x \leqslant A + B +
  C + D + E - \int_{\mathbbm{R}^2} \left( \triangle \partial^2 \omega
  \right)^2 \mathd x ,
\end{equation}
with
\begin{eqnarray}
  A & = & \left| \int_{\mathbbm{R}^2} \left[ \partial^2 \left( u \cdot \nabla
  \omega \right) - u \cdot \nabla \partial^2 \omega \right]  \left( \partial^2
  \omega \right) \mathd x \right| \nonumber \\
  &&\leqslant \int_{\mathbbm{R}^2} \left|
  \nabla^2 u \right|  \left| \nabla \omega \right|  \left| \nabla^2 \omega
  \right| \mathd x + \int_{\mathbbm{R}^2} \left| \nabla u \right|  \left|
  \nabla^2 \omega \right|^2 \mathd x ; \\
  B & = & \left| \int_{\mathbbm{R}^2} \left[ \partial^2 \left( b \cdot \nabla
  j \right) - b \cdot \nabla \partial^2 j \right]  \left( \partial^2 \omega
  \right) \mathd x \right| \nonumber\\
  && \leqslant \int_{\mathbbm{R}^2} \left| \nabla^2 b
  \right|  \left| \nabla j \right|  \left| \nabla^2 \omega \right| \mathd x +
  \int_{\mathbbm{R}^2} \left| \nabla b \right|  \left| \nabla^2 j \right| 
  \left| \nabla^2 \omega \right| \mathd x ; \\
  C & = & \left| \int_{\mathbbm{R}^2} \left[ \partial^2 \left( u \cdot \nabla
  j \right) - u \cdot \nabla \left( \partial^2 j \right) \right]  \left(
  \partial^2 j \right) \mathd x \right| \nonumber \\
  && \leqslant \int_{\mathbbm{R}^2} \left|
  \nabla^2 u \right|  \left| \nabla j \right|  \left| \nabla^2 j \right|
  \mathd x + \int_{\mathbbm{R}^2} \left| \nabla u \right|  \left| \nabla^2 j
  \right|^2 \mathd x ; \\
  D & = & \left| \int_{\mathbbm{R}^2} \left[ \partial^2 \left( b \cdot \nabla
  \omega \right) - b \cdot \nabla \left( \partial^2 \omega \right) \right] 
  \left( \partial^2 j \right) \mathd x \right| \nonumber \\
  &&\leqslant \int_{\mathbbm{R}^2}
  \left| \nabla^2 b \right|  \left| \nabla \omega \right|  \left| \nabla^2 j
  \right| \mathd x + \int_{\mathbbm{R}^2} \left| \nabla b \right|  \left|
  \nabla^2 \omega \right|  \left| \nabla^2 j \right. | \mathd x ; \\
  E & = & \left| \int_{\mathbbm{R}^2} \partial^2 T \left( \nabla u, \nabla b
  \right)  \left( \partial^2 j \right) \mathd x \right| \nonumber\\
  & \leqslant & \int_{\mathbbm{R}^2} \left| \nabla^3 u \right|  \left| \nabla
  b \right|  \left| \nabla^2 j \right| \mathd x + \int_{\mathbbm{R}^2} \left|
  \nabla^2 u \right|  \left| \nabla^2 b \right|  \left| \nabla^2 j \right|
  \mathd x + \int_{\mathbbm{R}^2} \left| \nabla u \right|  \left| \nabla^3 b
  \right|  \left| \nabla^2 j \right| \mathd x. 
\end{eqnarray}
Adding up all such partial derivatives, we obtain
\begin{equation}
  \frac{\mathd}{\mathd t} \left( \left\| \nabla^2 \omega \right\|_{L^2}^2 +
  \left\| \nabla^2 j \right\|_{L^2}^2 \right) \leqslant C \left( I_1 +I_{2}
+I_{3} +I_{4}+I_{5}+ I_6 \right) - 2 \left\| \nabla^4 \omega \right\|_{L^2}^2,
\end{equation}
with
\begin{eqnarray}
  I_1 & = & \int_{\mathbbm{R}^2} \left| \nabla^2 u \right|  \left| \nabla
  \omega \right|  \left| \nabla^2 \omega \right| \mathd x ; \\
  I_2 & = & \int_{\mathbbm{R}^2} \left| \nabla u \right|  \left| \nabla^2
  \omega \right|^2 \mathd x + \int_{\mathbbm{R}^2} \left| \nabla u \right| 
  \left| \nabla^2 j \right|^2 \mathd x + \int_{\mathbbm{R}^2} \left| \nabla u
  \right|  \left| \nabla^3 b \right|  \left| \nabla^2 j \right| \mathd x ; \\
  I_3 & = & \int_{\mathbbm{R}^2} \left| \nabla^2 b \right|  \left| \nabla j
  \right|  \left| \nabla^2 \omega \right| \mathd x ; \\
  I_4 & = & \int_{\mathbbm{R}^2} \left| \nabla b \right|  \left| \nabla^2 j
  \right|  \left| \nabla^2 \omega \right| \mathd x + \int_{\mathbbm{R}^2}
  \left| \nabla^3 u \right|  \left| \nabla b \right|  \left| \nabla^2 j
  \right| \mathd x ; \\
  I_5 & = & \int_{\mathbbm{R}^2} \left| \nabla^2 u \right|  \left| \nabla j
  \right|  \left| \nabla^2 j \right| \mathd x ; \\
  I_6 & = & \int_{\mathbbm{R}^2} \left| \nabla^2 b \right|  \left| \nabla
  \omega \right|  \left| \nabla^2 j \right| \mathd x + \int_{\mathbbm{R}^2}
  \left| \nabla^2 u \right|  \left| \nabla^2 b \right|  \left| \nabla^2 j
  \right| \mathd x .
\end{eqnarray}
We remark that the integrals in each $I_k$ can be estimated similarly,
therefore in the following we only show how to estimate the first integral in
each $I_k$.
\begin{itemize}
  \item $I_1$. For $I_1$ we write
  \begin{eqnarray}
    I_1 & \leqslant & \left\| \nabla^2 u \right\|_{L^4}  \left\| \nabla \omega
    \right\|_{L^4}  \left\| \nabla^2 \omega \right\|_{L^2} \nonumber\\
    & \leqslant & C \left\| \nabla \omega \right\|_{L^4}^2  \left\| \nabla^2
    \omega \right\|_{L^2} \nonumber\\
    & \leqslant & C \left\| u \right\|_{L^2}  \left\| \nabla^4 \omega
    \right\|_{L^2}  \left\| \nabla^2 \omega \right\|_{L^2} ,
  \end{eqnarray}
  where we have used the following Gagliardo-Nirenberg inequality
  \begin{equation}
    \left\| \nabla \omega \right\|_{L^4} \leqslant C \left\| u
    \right\|_{L^2}^{1 / 2} \left\| \nabla^4 \omega \right\|_{L^2}^{1 / 2} .
  \end{equation}
  Now by Young's inequality we have, after using $\left\| u \right\|_{L^2}
  \leqslant \left\| u_0 \right\|_{L^2}$,
  \begin{equation}
    I_1 \leqslant C \left( \varepsilon \right) \left\| u \right\|_{L^2}^2 
    \left\| \nabla^2 \omega \right\|_{L^2}^2 + \varepsilon \left\| \nabla^4
    \omega \right\|_{L^2}^2 \leqslant C \left( \varepsilon \right)  \left\|
    \nabla^2 \omega \right\|_{L^2}^2 + \varepsilon \left\| \nabla^4 \omega
    \right\|_{L^2} ,
  \end{equation}
  with $\varepsilon$ as small as necessary.
  
  \item $I_2$. We have
  \begin{eqnarray}
    \int_{\mathbbm{R}^2} \left| \nabla u \right|  \left| \nabla^2 \omega
    \right|^2 \mathd x & \leqslant & \left\| \nabla u \right\|_{L^{\infty}} 
    \left\| \nabla^2 \omega \right\|_{L^2}^2 \nonumber\\
    & \leqslant & C \left( 1 + \left\| u \right\|_{L^2} + \left\| \omega
    \right\|_{\tmop{BMO}}  \left( 1 + \log \left( 1 + \left\| \omega
    \right\|_{H^2}^2 + \left\| j \right\|_{H^2}^2 \right) \right) \right) 
    \left\| \nabla^2 \omega \right\|_{L^2}^2 \nonumber\\
    & \leqslant & C \left( 1 + \left\| \omega \right\|_{\tmop{BMO}}  \left( 1
    + \log \left( 1 + \left\| \omega \right\|_{H^2}^2 + \left\| j
    \right\|_{H^2}^2 \right) \right) \right)  \left\| \nabla^2 \omega
    \right\|_{L^2}^2 ,
  \end{eqnarray}
  where we have used the logarithmic inequality (\ref{eq:log-ineq}).
  
  \item $I_3$. We have
  \begin{eqnarray}
    \int_{\mathbbm{R}^2} \left| \nabla^2 b \right|  \left| \nabla j \right| 
    \left| \nabla^2 \omega \right| \mathd x & \leqslant & \left\| \nabla^2 b
    \right\|_{L^4}  \left\| \nabla j \right\|_{L^4}  \left\| \nabla^2 \omega
    \right\|_{L^2} \nonumber\\
    & \leqslant & C \left\| \nabla j \right\|_{L^4}^2  \left\| \nabla^2
    \omega \right\|_{L^2} \nonumber\\
    & \leqslant & C \left\| b \right\|_{L^2}^{1 / 3}  \left\| \nabla^2 j
    \right\|_{L^2}^{5 / 3}  \left\| \nabla^2 \omega \right\|_{L^2} ,
  \end{eqnarray}
  where we have used the following Gagliardo-Nirenberg inequality  \begin{equation}
    \left\| \nabla j \right\|_{L^4} \leqslant C \left\| b \right\|_{L^2}^{1 /
    6} \left\| \nabla^2 j \right\|_{L^2}^{5 / 6} .
  \end{equation}
  As a consequence (recall the definition of $M \left( t \right)$ in
  (\ref{eq:def-M}))
  \begin{equation}
    I_3 \leqslant C \left\| \nabla^2 \omega \right\|_{L^2} M \left( t
    \right)^{5 / 6} .
  \end{equation}
%cvt12 I have added the kinetic energy to the inequality below
  Here we have used the energy conservation $\left\| b \right\|_{L^2}
  \leqslant \left\| b_0 \right\|_{L^2} + \left\| u_0 \right\|_{L^2}$.
  
  \item $I_4$. We have
  \begin{eqnarray}
    \int_{\mathbbm{R}^2} \left| \nabla b \right|  \left| \nabla^2 j \right| 
    \left| \nabla^2 \omega \right| \mathd x & \leqslant & \left\| \nabla b
    \right\|_{L^{\infty}}  \left\| \nabla^2 j \right\|_{L^2}  \left\| \nabla^2
    \omega \right\|_{L^2} \nonumber\\
    & \leqslant & C \left\| b \right\|_{L^2}^{1 / 3}  \left\| \nabla^2 j
    \right\|_{L^2}^{5 / 3}  \left\| \nabla^2 \omega \right\|_{L^2} ,
  \end{eqnarray}
  where we have used the following Gagliardo-Nirenberg inequality  \begin{equation}
    \left\| \nabla b \right\|_{L^{\infty}} \leqslant C \left\| b
    \right\|_{L^2}^{1 / 3} \left\| \nabla^2 j \right\|_{L^2}^{2 / 3} .
  \end{equation}
  Therefore
  \begin{equation}
    I_4 \leqslant C \left\| \nabla^2 \omega \right\|_{L^2} M \left( t
    \right)^{5 / 6} .
  \end{equation}
  \item $I_5$. We have
  \begin{eqnarray}
    I_5 & = & \int_{\mathbbm{R}^2} \left| \nabla^2 u \right|  \left| \nabla j
    \right|  \left| \nabla^2 j \right| \mathd x \nonumber\\
    & \leqslant & \left\| \nabla^2 u \right\|_{L^4}  \left\| \nabla j
    \right\|_{L^4}  \left\| \nabla^2 j \right\|_{L^2} \nonumber\\
    & \leqslant & C \left\| u \right\|_{L^2}^{1 / 6}  \left\| \nabla^2 \omega
    \right\|_{L^2}^{5 / 6}  \left\| b \right\|_{L^2}^{1 / 6}  \left\| \nabla^2
    j \right\|_{L^2}^{11 / 6} ,
  \end{eqnarray}
  where we have used the following Gagliardo-Nirenberg inequalities
  \begin{equation}
    \left\| \nabla^2 u \right\|_{L^4} \leqslant C \left\| u \right\|_{L^2}^{1
    / 6}  \left\| \nabla^2 \omega \right\|_{L^2}^{5 / 6} ; \hspace{2em}
    \left\| \nabla j \right\|_{L^4} \leqslant C \left\| b \right\|_{L^2}^{1 /
    6} \left\| \nabla^2 j \right\|_{L^2}^{5 / 6} .
  \end{equation}
  Hence
  \begin{equation}
    I_5 \leqslant C \left\| \nabla^2 \omega \right\|_{L^2}^{5 / 6} M \left( t
    \right)^{11 / 12} \leqslant C \left( 1 + \left\| \nabla^2 \omega
    \right\|_{L^2} \right) M \left( t \right)^{11 / 12} .
  \end{equation}
  \item $I_6$. We have
  \begin{eqnarray}
    \int_{\mathbbm{R}^2} \left| \nabla^2 b \right|  \left| \nabla \omega
    \right|  \left| \nabla^2 j \right| \mathd x & \leqslant & \left\| \nabla^2
    b \right\|_{L^4}  \left\| \nabla \omega \right\|_{L^4}  \left\| \nabla^2 j
    \right\|_{L^2} \nonumber\\
    & \leqslant & C \left\| \nabla j \right\|_{L^4}  \left\| \nabla \omega
    \right\|_{L^4}  \left\| \nabla^2 j \right\|_{L^2} \nonumber\\
    & \leqslant & \left\| b \right\|_{L^2}^{1 / 6}  \left\| u
    \right\|_{L^2}^{1 / 6}  \left\| \nabla^2 \omega \right\|_{L^2}^{5 / 6} 
    \left\| \nabla^2 j \right\|_{L^2}^{11 / 6} ,
  \end{eqnarray}
  where we have used the following Gagliardo-Nirenberg inequalities
  \begin{equation}
    \left\| \nabla \omega \right\|_{L^4} \leqslant C \left\| u
    \right\|_{L^2}^{1 / 6}  \left\| \nabla^2 \omega \right\|_{L^2}^{5 / 6} ;
    \hspace{2em} \left\| \nabla j \right\|_{L^4} \leqslant C \left\| b
    \right\|_{L^2}^{1 / 6} \left\| \nabla^2 j \right\|_{L^2}^{5 / 6}.
  \end{equation}
  Hence
  \begin{equation}
    I_6 \leqslant C \left\| \nabla^2 \omega \right\|_{L^2}^{5 / 6} M \left( t
    \right)^{11 / 12} \leqslant C \left( 1 + \left\| \nabla^2 \omega
    \right\|_{L^2} \right) M \left( t \right)^{11 / 12} .
  \end{equation}
\end{itemize}
Summarizing, we have
\begin{eqnarray}
  \frac{\mathd}{\mathd t} \left( \left\| \nabla^2 \omega \right\|_{L^2}^2 +
  \left\| \nabla^2 j \right\|_{L^2}^2 \right) & \leqslant & C \left( T_0 \right) 
  \left[ M \left( t \right) + \left( 1 + \left\| \nabla^2 \omega
  \right\|_{L^2} \right) M \left( t \right)^{11 / 12}\right. \nonumber \\ 
  && + \left.\left\| \omega
  \right\|_{\tmop{BMO}} M \left( t \right) \log \left( 1 + M \left( t \right)
  \right) \right].
\end{eqnarray}
Using our assumption on $T_0$ (\ref{eq:Cond-T0}) and the monotonicity of $M(t)$, we have $\log \left(
1 + M \left( t \right) \right) > 1$ and therefore
\begin{eqnarray}
  \frac{\mathd}{\mathd t} \left( \left\| \nabla^2 \omega \right\|_{L^2}^2 +
  \left\| \nabla^2 j \right\|_{L^2}^2 \right) &\leqslant& C\left( T_0 \right)
  \left[ \left( 1 + \left\| \nabla^2 \omega \right\|_{L^2} \right) M \left( t
  \right)^{11 / 12} \right. \nonumber \\
  &&+ \left. \left(1+\left\| \omega \right\|_{\tmop{BMO}}\right)
   M \left( t \right)
  \log \left( 1 + M \left( t \right) \right) \right] .
\end{eqnarray}
Integrating, we have
\begin{eqnarray}
  M \left( t \right) & \leqslant& C \left( T_0 \right) \left[ M_0 + \left(
  \int_{T_0}^t \left( 1 + \left\| \nabla^2 \omega \right\|_{L^2} \right)
  \mathd \tau \right) M \left( t \right)^{11 / 12}\right. \nonumber \\
  && + \left.\int_{T_0}^t \left[
  \left(1+
  \left\| \omega \right\|_{\tmop{BMO}}\right)
   M \left( \tau \right) \log \left( 1 + M
  \left( \tau \right) \right) \right] \mathd \tau \right] ,
\end{eqnarray}
with $M_0 \assign \left\| \omega \right\|_{H^2}^2 \left( T_0 \right) + \left\|
j \right\|_{H^2}^2 \left( T_0 \right)$.

Now taking $\delta = 1 / 24$, we have
\begin{equation}
  \int_{T_0}^t \left( 1 + \left\| \nabla^2 \omega \right\|_{L^2} \right)
  \mathd \tau \leqslant C \left( T_0 \right) \left( 1+ M \left( t \right)\right)^{1 / 24} ,
\end{equation}
which leads to
\begin{eqnarray}
  M \left( t \right) &\leqslant& C \left( T_0 \right) \left[ M_0 + M \left( t
  \right)^{11 / 12} \left(1+M(t)\right)^{1/24}\right. \nonumber \\
  && + \left.\int_{T_0}^t \left[ \left(1+\left\| \omega \right\|_{\tmop{BMO}}\right)
  M \left( \tau \right) \log \left( 1 + M \left( \tau \right) \right) \right]
  \mathd \tau \right] .
\end{eqnarray}
This in turn gives
\begin{eqnarray}
  1 + M \left( t \right) &\leqslant & C \left( T_0 \right)  \left[ \left( 1 + M_0
  \right) + \left( 1 + M \left( t \right) \right)^{23 / 24} \right. \nonumber \\
&& + \left.\int_{T_0}^t
  \left[ \left(1+\left\| \omega \right\|_{\tmop{BMO}}\right)
    \left( 1 + M \left( \tau
  \right) \right) \log \left( 1 + M \left( \tau \right) \right) \right] \mathd
  \tau \right] .
\end{eqnarray}
Now we set $N \left( t \right) \assign \left( 1 + M \left( t \right)
\right)^{1 / 24}$, $N_0 \assign \left( 1 + M_0 \right)^{1 / 24}$ and divide
both sides by $\left( 1 + M \left( t \right) \right)^{23 / 24}$, using the
monotonicity of $M \left( t \right)$ we reach
\begin{equation}
  N \left( t \right) \leqslant C \left( T_0 \right)  \left[ \left( 1 + N_0
  \right) + \int_{T_0}^t \left(1+\left\| \omega \right\|_{\tmop{BMO}}\right)
   N \left( \tau
  \right) \log \left( N \left( \tau \right) \right) \mathd \tau \right] .
\end{equation}
Application of the standard Gronwall's inequality now gives the following bound of $N$ 
\begin{equation}
  N \left( t \right) \leqslant \left[ C \left( T_0 \right)  \left( 1 + N_0
  \right) \right]^{\exp \left[ C \left( T_0 \right)  \int_{T_0}^t \left(1+
  \left\|
  \omega \right\|_{\tmop{BMO}}\right)
   \mathd \tau \right]} ,
\end{equation}
which gives
\begin{equation}
  M \left( t \right) \leqslant \left[ C \left( T_0 \right)  \left( 1 + N_0
  \right) \right]^{24 \exp \left[ C \left( T_0 \right)  \int_{T_0}^t
   \left(1+\left\|
  \omega \right\|_{\tmop{BMO}}\right)
   \mathd \tau \right]} .
  \label{eq:M-est}
\end{equation}
Since $\int_{T_0}^t \left\| \omega \right\|_{\tmop{BMO}} \left( \tau \right)
\mathd \tau$ remains bounded as $t \nearrow T$, (\ref{eq:M-est}) contradicts
our assumption that $M \left( t \right) \nearrow \infty$ as $t \nearrow T$ and
ends the proof.

\subsection{$H^k$ estimate}\label{subsec:alpha=2-Hk-est}

As we have already proved that the $H^2$ norms of $\omega$ and $j$ have to remain bounded as $t\nearrow T$, thanks to the embedding $H^2 \longhookrightarrow L^{\infty}$ in
$\mathbbm{R}^2$, we have
\begin{equation}
  \omega, j \in L^{\infty} \left( 0, T ; L^{\infty} \right)
\end{equation}
as a result of the argument in \ref{subsec:alpha=2-H1-est} and
\ref{subsec:alpha=2-H3-est}. The $H^k$ estimate and global regularity is now a
simple consequence of the BKM-type criterion in {\cite{Caflisch1997}}.

\section{Global regularity when the magnetic lines are
smooth}\label{sec:b_good}

This section proves Theorem \ref{thm:b_good}, which states that the system
\begin{eqnarray}
  u_t + u \cdot \nabla u & = & - \nabla p + b \cdot \nabla b, 
  \label{eq:GMHD-u-bgood}\\
  b_t + u \cdot \nabla b & = & b \cdot \nabla u - \Lambda^{2 \beta} b, 
  \label{eq:GMHD-b-bgood}\\
  \nabla \cdot u = \nabla \cdot b & = & 0,  \label{eq:GMHD-div-bgood}
\end{eqnarray}
with $\beta > 1$ and \ $\left( u_0, b_0 \right) \in H^k$ for some $k > 2$, is
globally regular if $\widehat{b} \assign \frac{b}{\left| b \right|} \in L^{\infty}
\left( 0, T ; W^{2, \infty} \right)$.

\begin{proof}
  As $\beta > 1$, following Lemma \ref{lem:H1-est} we already have $H^1$
  estimate which in particular gives
  \begin{equation}
    j \in L^2 \left( 0, T ; H^{\beta} \right) \longhookrightarrow L^2 \left(
    0, T ; L^{\infty} \right)
  \end{equation}
  since $H^{\beta} \longhookrightarrow L^{\infty}$. Thanks to the BKM-type
  criteria in {\cite{Caflisch1997}}, all we need to prove is that $\omega \in
  L^1 \left( 0, T ; L^{\infty} \right)$.
  
  For a proof of $\omega \in L^1 \left( 0, T ; L^{\infty} \right)$, 
  let us examine the vorticity equation
  \begin{equation}
    \omega_t + u \cdot \nabla \omega = \nabla^{\bot} \cdot \left( b \cdot
    \nabla b \right),
  \end{equation}
  where the ``forcing'' term has been given in its raw form instead of 
  $b \cdot \nabla j$ for the very purpose of this proof. By writing
  \begin{equation}
    b = \widehat{b}  \left| b \right|
  \end{equation}
  and using the divergence free condition $\nabla \cdot b = 0$, we have
  \begin{equation}
    \widehat{b} \cdot \nabla \left| b \right| = - \left( \nabla \cdot \widehat{b}
    \right)  \left| b \right| .
  \end{equation}
  It follows that
  \begin{equation}
    b \cdot \nabla b = \left| b \right|  \left[ \widehat{b} \cdot \nabla \left(
    \widehat{b}  \left| b \right| \right) \right] = \left[ \widehat{b} \cdot \nabla
    \widehat{b} - \left( \nabla \cdot \widehat{b} \right)  \widehat{b} \right]  \left| b
    \right|^2 .
  \end{equation}
  Therefore the vorticity equation can be written as
  \begin{equation}
    \omega_t + u \cdot \nabla \omega = \nabla^{\bot} \cdot \left\{ \left[
    \widehat{b} \cdot \nabla \widehat{b} - \left( \nabla \cdot \widehat{b} \right) 
    \widehat{b} \right]  \left| b \right|^2 \right\} = A \left( x, t \right) 
    \left| b \right|^2 + B \left( x, t \right) \cdot \left( b \cdot
    \nabla^{\bot} b \right),
  \end{equation}
  where
  \begin{equation}
    A \left( x, t \right) = \nabla^{\bot} \cdot \left[ \widehat{b} \cdot \nabla
    \widehat{b} - \left( \nabla \cdot \widehat{b} \right)  \widehat{b} \right],
    \hspace{2em} B \left( x, t \right) = \widehat{b} \cdot \nabla \widehat{b} - \left(
    \nabla \cdot \widehat{b} \right)  \widehat{b} .
  \end{equation}
  As $\widehat{b} \in W^{2, \infty}$ by our assumption, we readily deduce that
  \begin{equation}
    A \left( x, t \right), B \left( x, t \right) \in L^{\infty} \left( 0, T ;
    L^{\infty} \right) .
  \end{equation}
  Now since $\beta > 1$, the earlier estimates in $H^1$ mean 
  \begin{equation}
    j \in L^2 \left( 0, T ; H^{\beta} \right) \Longrightarrow \nabla b \in L^2
    \left( 0, T ; H^{\beta} \right) \Longrightarrow \nabla b \in L^2 \left( 0,
    T ; L^{\infty} \right) .
  \end{equation}
  It follows that
  \begin{equation}
    \left| b \right|^2, b \cdot \nabla^{\bot} b \in L^1 \left( 0, T ;
    L^{\infty} \right) .
  \end{equation}
  Putting things together, we see that
  \begin{equation}
    \omega_t + u \cdot \nabla \omega = F \left( x, t \right) \assign A \left(
    x, t \right)  \left| b \right|^2 + B \left( x, t \right) \cdot \left( b
    \cdot \nabla^{\bot} b \right),
  \end{equation}
  with $F \left( x, t \right) \in L^1 \left( 0, T ; L^{\infty} \right)$. Since
  we are dealing with smooth solutions here, this immediately leads to
  \begin{equation}
    \omega \in L^{\infty} \left( 0, T ; L^{\infty} \right) \longhookrightarrow
    L^1 \left( 0, T ; L^{\infty} \right)
  \end{equation}
  and the proof is completed. 
\end{proof}

\begin{remark}
  For solutions not smooth enough, we can argue as follows. First note that $j
  \in L^2 \left( 0, T ; H^{\beta} \right)$ implies $\nabla b \in L^2 \left( 0,
  T ; L^q \right)$ for any $q$, and furthermore $\left\| \nabla b
  \right\|_{L^2 \left( 0, T ; L^q \right)}$ is uniformly bounded in $q$.
  Consequently $F \in L^1 \left( 0, T ; L^q \right)$ for any $q$ with 
  uniformly bounded norms. Now we multiply the equation by 
  $\left| \omega \right|^{p - 2} \omega$ and integrate. After 
  simplification we get
  \begin{equation}
    \frac{\mathd}{\mathd t} \left\| \omega \right\|_{L^p}^p \leqslant p \left|
    \int_{\mathbbm{R}^2} F \left( x, t \right)  \left| \omega \right|^{p - 2}
    \omega \mathd x \right| \leqslant p \left\| F \right\|_{L^p}  \left\|
    \omega \right\|_{L^p}^{p - 1},
  \end{equation}
  which implies
  \begin{equation}
    \frac{\mathd}{\mathd t} \left\| \omega \right\|_{L^p} \leqslant \left\| F
    \right\|_{L^p} .
  \end{equation}
  This gives a uniform bound on $\left\| \omega \right\|_{L^p}$ and
  consequently a bound on $\left\| \omega \right\|_{L^{\infty}}$.
\end{remark}

\paragraph{\textbf{Acknowledgment}}X. Yu and Z. Zhai are supported by a grant from
NSERC and the Startup grant from Faculty of Science of University of Alberta. The authors would like to thank the anonymous referee for the valuable comments and suggestions.

\end{document}